\let\rtimess\rtimes
\let\rtimes\rtimess 
\titlespacing{\section}{0pt}{12pt plus 4pt minus 2pt}{0pt plus 2pt minus 2pt}
\titlespacing{\subsection}{0pt}{0pt plus 2pt minus 2pt}{0pt plus 2pt minus 2pt}
\titlespacing{\subsubsection}{0pt}{0pt plus 2pt minus 2pt}{0pt plus 2pt minus 2pt}
\titleformat{\section}[block]{\Large\bfseries\scshape\filcenter}{\thesection.}{1ex}{}
\titleformat{\subsection}{\large\scshape\filcenter}{\thesubsection}{1ex}{}
\numberwithin{equation}{section}
\newtheoremstyle{thms}{1em}{0pt}{\itshape}{}{\itshape\bfseries}{. ----}{ }{\thmname{#1} \thmnumber{#2}\normalfont\thmnote{ (#3)}}
\theoremstyle{thms}
\newaliascnt{Thm}{equation}
\newtheorem{Thm}[Thm]{Theorem}				
\newaliascnt{Prop}{equation}							
\newaliascnt{Lemma}{equation}						
\newtheorem{Lemma}[Lemma]{Lemma}
\newaliascnt{Cor}{equation}							
\newtheorem{Cor}[Cor]{Corollary}
\newaliascnt{Conj}{equation}							
\newaliascnt{Question}{equation}						
\newtheoremstyle{defs}{1em}{0pt}{}{}{\itshape\bfseries}{. ----}{ }{\thmname{#1} \thmnumber{#2}}
\theoremstyle{defs}
\newaliascnt{Rmk}{equation}							
\newtheorem{Rmk}[Rmk]{Remark}
\newaliascnt{Fact}{equation}							
\newaliascnt{Def}{equation}							
\newaliascnt{Ex}{equation}							
\newaliascnt{Con}{equation}							
\newaliascnt{Not}{equation}							
\newaliascnt{Setup}{equation}						
\newaliascnt{Picture}{equation}						
\newtheoremstyle{par}{1em}{0pt}{}{}{\itshape\bfseries}{. ----}{ }{\thmnumber{#2}}
\theoremstyle{par}
\newaliascnt{Par}{equation}							
\newtheorem{Par}[Par]{Paragraph}
\theoremstyle{thms}
\newtheorem{thm}{Theorem}
\newtheorem*{thm*}{Theorem}
\newtheorem*{lemma*}{Lemma}
\LetLtxMacro\oldproof\proof	
\renewcommand{\proof}[1][Proof]{\oldproof[#1]\unskip}
\LetLtxMacro\oldendproof\endproof
\renewcommand{\endproof}{\oldendproof\unskip}
\newenvironment{itemize*} 
  {\begin{itemize}
    \setlength{\itemsep}{1em}
    \setlength{\parskip}{-1em}
    \setlength{\topsep}{0pt}
    \setlength{\partopsep}{0pt}}
  {\end{itemize}}
\newenvironment{enumerate*}
  {\begin{enumerate}
    \setlength{\itemsep}{1em}
    \setlength{\parskip}{-1em}
    \setlength{\topsep}{0pt}
    \setlength{\partopsep}{0pt}}
  {\end{enumerate}}
\setlist{itemsep=0em,topsep=0cm,partopsep=0em,parsep=\lineskip}
\setlist[enumerate]{label=\normalfont(\alph*), align=left, leftmargin=3em, labelwidth=1em, itemindent=0pt, listparindent=0pt, labelindent=1em, labelsep=*}
\setlist[itemize]{leftmargin=1.3em}
\newcommand{\Set}{\mathbf{Set}}
\newcommand{\Fin}{\mathbf{Fin}}
\newcommand{\Et}{\operatorname{\acute Et}}
\DeclareMathOperator{\Char}{char}
\newcommand{\et}{{\operatorname{\acute et}}}
\newcommand{\lc}{_{\operatorname{lc}}}
\newcommand{\cons}[1][]{_{\ifthenelse{\equal{#1}{}}{\operatorname{cons}}{#1\operatorname{-cons}}}}
\newcommand{\gp}{^{\operatorname{gp}}}
\DeclareMathOperator{\Pro}{Pro}
\DeclareMathOperator{\Hom}{Hom}
\DeclareMathOperator{\Ext}{Ext}
\DeclareMathOperator{\RHom}{RHom}
\DeclareMathOperator{\Fun}{Fun}
\DeclareMathOperator{\Map}{Map}
\DeclareMathOperator{\Sh}{Sh}
\DeclareMathOperator*{\colim}{colim}
\DeclareMathOperator{\Spec}{Spec}
\DeclareMathOperator{\Gal}{Gal}
\DeclareMathOperator{\ev}{ev}
\newcommand{\op}{^{\operatorname{op}}}
\newcommand{\cts}{^{\operatorname{cts}}}
\newcommand{\fin}{^{\operatorname{fin}}}
\newcommand{\punct}[1]{\makebox[0pt][l]{\,#1}} 
\let\OLDthebibliography\thebibliography
\renewcommand\thebibliography[1]{
  \OLDthebibliography{#1}
  \setlength{\parskip}{0pt}
  \setlength{\itemsep}{0pt plus 0.1em}
}
\begin{document}

\renewcommand{\sectionautorefname}{Section}
\renewcommand{\subsectionautorefname}{Subsection}		

\begin{center}
\vspace*{-3em}
\noindent\makebox[\linewidth]{\rule{15cm}{0.4pt}}
\vspace{-.5em}

{\LARGE{\textbf{Constructible sheaves on toric varieties}}}

\vspace{.5em}

{\textsc{Remy van Dobben de Bruyn}}

\vspace*{.25em}
\leavevmode
\noindent\makebox[\linewidth]{\rule{11cm}{0.4pt}}
\vspace{1em}
\end{center}

\renewcommand{\abstractname}{\small\bfseries\scshape Abstract}

\begin{abstract}\noindent
This paper gives an explicit computation of the category of constructible sheaves on a toric variety (with respect to the stratification by torus orbits). Over the complex numbers, this simplifies a description due to Braden and Lunts. The same computation is carried out for split toric varieties over an arbitrary field, for constructible \'etale sheaves whose restriction to each stratum is locally constant and tamely ramified. This gives the first explicit computation of an \'etale exodromy theorem in positive characteristic for a stratification over a partially ordered set of height greater than 1.
\end{abstract}

\section*{Introduction}
If $X$ is a split toric variety over a field $k$, it has a canonical stratification by torus orbits. This gives a continuous map $X \to S$, where $S$ is the set of orbits, endowed with the quotient topology. If we write $s \leq t$ if and only if $X_s \subseteq \overline{X_t}$, then the topology on $S$ agrees with the Alexandroff topology whose opens are the upwards closed sets for this partial order. In this paper, we compute the category $\Sh\cons[S]^t(X_\et)$ of $S$-constructible sheaves of sets on $X$ whose restriction to each stratum $X_s$ is (locally constant and) tamely ramified.

\begin{thm}\label{Thm main}
Let $X$ be a split normal toric variety over $k$, with the toric stratification $X \to S$ as above. There is a canonical equivalence
\[
\Sh\cons[S]^t(X_\et) \simeq \Fun\cts\big(\Pi_1^t(X,S),\Fin\big),
\]
where $\Pi_1^t(X,S)$ is the profinitely enriched category whose objects are $s \in S$ and whose morphisms are
\[
\Hom(s,t) = \begin{cases} \pi_1^t(X_{\geq s}), & s \leq t, \\ \varnothing, & s \not\leq t.\end{cases}
\]
Composition $\Hom(s,t) \times \Hom(r,s) \to \Hom(r,t)$ for $r \leq s \leq t$ is induced by multiplication in $\pi_1^t(X_{\geq r})$ under the map $\pi_1^t(X_{\geq s}) \to \pi_1^t(X_{\geq r})$. If $p = \Char k \geq 0$, then $\pi_1^t(X_{\geq s}) \cong \pi_1^t(X_s)$ is the semidirect product $(\widehat{\mathbf Z}^{(p')}(1))^{\dim X_s} \rtimes \Gal(\bar k/k)$, where $\widehat{\mathbf Z}^{(p')}$ denotes the prime-to-$p$-completion of $\mathbf Z$ (which we take to be $\widehat{\mathbf Z}$ if $p = 0$).
\end{thm}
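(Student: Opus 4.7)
The plan is to split the theorem into two essentially independent parts: an exodromy-type equivalence reducing $S$-constructible tame sheaves to continuous representations of a fundamental category, and a toric-specific computation of the hom-sets and composition law.

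For the exodromy step I would choose a compatible system of geometric points $\bar x_s \to X_s$ and associate to each $\mathcal{F} \in \Sh\cons[S]^t(X_\et)$ the functor sending $s \mapsto \mathcal{F}_{\bar x_s}$, with an element $\gamma \in \pi_1^t(X_{\geq s})$ for $s \leq t$ producing a specialization map $\mathcal{F}_{\bar x_s} \to \mathcal{F}_{\bar x_t}$ via tame monodromy in the open $X_{\geq s}$ (concatenating $\gamma$ with a chosen reference specialization between $\bar x_s$ and $\bar x_t$). Constructibility plus tameness guarantees this factors through a finite continuous quotient. A quasi-inverse is built by recollement: given a continuous functor $F$, reconstruct $\mathcal{F}$ stratum-by-stratum over the filtration, using that tame locally constant sheaves on a connected variety $Y$ are classified by continuous finite $\pi_1^t(Y)$-sets, and gluing the strata via the specialization data encoded in $F$ on morphisms.

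For the identification of $\pi_1^t(X_{\geq s})$, I would observe that $X_{\geq s}$ equals the affine toric variety $U_\sigma = \Spec k[\sigma^\vee \cap M]$ attached to the cone $\sigma$ in the fan of $X$ corresponding to $s$, with unique closed orbit $X_s \cong \mathbf{G}_m^{\dim X_s}$ (over $k$, by splitness). Any cocharacter $\lambda \colon \mathbf{G}_m \to T$ in the relative interior of $\sigma$ extends to a $\mathbf{G}_m$-action on $U_\sigma$ whose $t \to 0$ limit produces an algebraic retraction $U_\sigma \to X_s$. A contraction principle for this action yields $\pi_1^t(X_s) \xrightarrow{\sim} \pi_1^t(U_\sigma)$, which combined with the classical Kummer-theoretic computation $\pi_1^t(\mathbf{G}_m^d) \cong (\widehat{\mathbf Z}^{(p')}(1))^d \rtimes \Gal(\bar k/k)$ gives the explicit description. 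The composition law then follows by functoriality of $\pi_1^t$ applied to the open inclusion $X_{\geq s} \hookrightarrow X_{\geq r}$ for $r \leq s$.

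The main technical obstacle is the contraction principle for the tame fundamental group of a normal but possibly singular affine toric variety. When $\sigma$ is regular one has $U_\sigma \cong \mathbf{A}^r \times \mathbf{G}_m^{n-r}$, reducing the claim to tame homotopy invariance of $\mathbf{A}^r$. In general I would argue either by direct analysis of tame ramification along the $T$-invariant boundary divisors of $U_\sigma$ using Abhyankar's lemma for toroidal schemes, or by an equivariant descent argument: every finite tame étale cover of $U_\sigma$ admits a $\mathbf{G}_m$-equivariant structure for the contracting action (the cohomological obstruction being trivialized by the retraction), and equivariance then forces the cover to descend from its restriction to the fixed locus $X_s$.
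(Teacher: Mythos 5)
Your high-level plan is reasonable and correctly isolates the two halves of the problem, but the part you defer --- the ``contraction principle'' $\pi_1^t(X_s) \xrightarrow{\sim} \pi_1^t(X_{\geq s})$ for a possibly singular affine toric variety, together with the computation of $i^*j_{s,*}$ on tame locally constant sheaves --- is not a technical afterthought: it is the entire content of the paper's hardest lemma, and neither of your two proposed strategies closes it. The equivariant-descent idea has a circularity problem: for a $\mathbf G_m$-action on a variety, a finite \'etale cover admits an equivariant structure only when the induced map $\pi_1(\mathbf G_m) \to \pi_1$ of the base acts trivially on its fibres (already for the squaring cover of $\mathbf G_m$ with the scaling action there is no lift), so ``the cohomological obstruction is trivialized by the retraction'' presupposes that the cover is pulled back from the fixed locus, which is what you are trying to prove. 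Moreover, even granting equivariance, the step ``equivariance forces the cover to descend from the fixed locus'' is asserted, not argued. Your first strategy (Abhyankar along the toric boundary) can be made to work, but one must be careful that purity-type arguments are not available on the singular $U_\sigma$; the paper instead proves that $\pi^*\colon \Sh\lc^t(Y_\et) \to \Sh\lc^t(X_\et)$ is an equivalence directly, using that $\pi$ has geometrically connected fibres (so $\pi_*$ preserves finite coproducts and the unit $\mathscr H \to \pi_*\pi^*\mathscr H$ is an isomorphism) plus a formal-completion argument at the generic point of the closed stratum to show that any connected tame cover of the torus with nontrivial $(\overline M{}^{\mathrm{gp}})^\vee$-monodromy has $i^*j_*\mathscr G = \varnothing$. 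The isomorphism on $\pi_1^t$ then falls out of this categorical statement rather than feeding into it.

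There is a second, related gap in your exodromy step. Your quasi-inverse ``glues the strata via the specialization data encoded in $F$'' by recollement, but recollement along $U \hookrightarrow X \hookleftarrow Z$ requires specifying a map $\mathscr F_Z \to i^*j_*\mathscr F_U$, so you need to know in advance what $i^*j_*$ does to a tame locally constant sheaf on a stratum --- again the same key computation. You also need to know that the resulting glued sheaf is $S$-constructible and tame, which is the structure theorem (every object of $\Sh\cons[S]^t(X_\et)$ is a finite limit of sheaves $j_{s,*}\mathscr G$). The paper sidesteps the hand-built equivalence entirely by citing the general \'etale exodromy theorem, which identifies $\Sh\cons[S]^t(X_\et)$ with continuous representations of the category of fibre functors; the remaining work is then to compute $\Hom(F_{\bar s},F_{\bar t})$, which is done by showing $F_{\bar s}$ is pro-represented by $j_{\geq s,!}\pi^*\widetilde{X_s}$ --- and the existence of the left adjoint $\pi^*$ to $i^*$ on constructible sheaves is exactly where the isomorphism $\pi_* \cong i^*$ is used. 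So your outline is compatible with the truth, but as written it postpones, rather than supplies, the one argument the theorem actually needs.
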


We also prove the analogous result for the category of $S$-constructible sheaves of sets on a complex toric variety; see \autoref{Thm complex}. The answer is the same, after replacing $\pi_1^t$ by $\pi_1$ and $\widehat{\mathbf Z}^{(p')}(1)$ by $\mathbf Z$ and removing the profinite enrichment and continuity. This is a simplification of the statement and proof of a similar description obtained by Braden and Lunts \cite[Thm.\ 4.3.2]{BradenLunts}.

This result is an example of an \emph{exodromy correspondence}, computing constructible sheaves on a stratified topological space $X \to S$ as a functor category. The category $\Pi_1(X,S)$ is called the \emph{fundamental category} or \emph{exit path category} of $X \to S$. For reasonable topological spaces, a general exodromy correspondence was first proven by MacPherson, appearing in print in work of Treumann \cite[Thm.\ 1.2]{Treumann}. An $\infty$-categorical version was obtained by Lurie \cite[Thm.\ A.9.3]{LurieHA}, and there is now quite a body of literature on variants \cite{CurryPatel}, generalisations \cite{Lejay,PortaTeyssier,HainePortaTeyssier}, alternative proofs \cite{ClausenJansen,Jansen}, and computations \cite{JansenMg,JansenBS,ClausenJansen}. The computation of Braden and Lunts \cite{BradenLunts} predates the first published account of exodromy \cite{Treumann}, and is not stated in that language.

The \'etale exodromy theorem was first proven by Barwick, Glasman, and Haine \cite{BGH}, and this was used by Wolf to obtain a pro-\'etale version \cite{Wolf}. An interesting difference with the topological case is that, for \'etale sheaves, one may also take the limit over all stratifications: the resulting fundamental category is called the \emph{Galois category}, and it is now an internal category in profinite sets (the set of objects is itself a profinite set). A simplified proof of the results of \cite{BGH} and \cite{Wolf} will appear in \cite{vDdBWolf}, using condensed (higher) categories. For a fixed stratification, a very short and low-tech proof in the style of Grothendieck's Galois theory \cite[Exp.\ V, Thm.\ 4.1]{SGA1} is given in \cite{vDdB}.

In contrast with the topological case, very few computations have been carried out for the \'etale exodromy theorem. This paper is a first step towards a more systematic computation of constructible sheaves on (easy) stratified algebraic varieties, with the caveat that one often cannot even compute tame fundamental groups (the trivially stratified case). This paper is also a proof of concept that \'etale exodromy really gives explicit descriptions of categories of sheaves.

Section 1 sets up some notation for toric varieties. In section 2, we will prove the topological exodromy theorem for toric varieties, and in section 3, we modify the arguments to obtain the \'etale version over an arbitrary base field.

\subsection*{Acknowledgements} 
The author thanks Peter Haine for explaining some basic computations of exit path categories, and Katharina H\"ubner for answering a number of questions about tame fundamental groups and tame homotopy types. The author is supported by the NWO grant VI.Veni.212.204.

\section{Notation for toric varieties}
\begin{Par}
If $k$ is a field, then a \emph{split affine toric variety} over $k$ is a variety of the form $\Spec k[M]$ for some finitely generated, commutative, and cancellative monoid $M$ such that $M\gp$ is torsion-free. The main theorem restricts to the normal case, which means that $M$ is moreover saturated. A \emph{split toric variety} over $k$ is a variety built from a fan \cite{Fulton,CLS} or a separated and connected toric monoscheme \cite[Ch.\ II, \S1.9]{Ogus} in the usual manner.
\end{Par}

\begin{Par}\label{Par toric stratification}
Recall that a split toric variety $X$ over a field $k$ has a stratification by its torus orbits. If $S$ denotes the set of torus orbits, it has a partial order given by $s \leq t$ if and only if $X_s \subseteq \overline{X_t}$. Endowing $S$ with the Alexandroff topology whose open sets are the upwards closed sets, the map $X \to S$ taking a point to its corresponding orbit is continuous.

We write $S_{\leq s} = \{t \in S\ |\ t \leq s\}$, and similarly for $S_{\geq s}$, and write $X_{\leq s}$ and $X_{\geq s}$ for their respective preimages in $X$. Note that $X_{\leq s}$ is the closure $\overline{X_s}$, while $X_{\geq s}$ is the unique minimal open toric subvariety containing $X_s$, which is therefore necessarily affine. Note that $X_{\geq s}$ is called the \emph{star} of $X_s$ in \cite[\S3.5]{BradenLunts}, but we will avoid this terminology because there is a potential for confusion with the toric subvariety corresponding to the star of a given cone $s$, which is the closure $X_{\leq s} = \overline{X_s}$ \cite[Prop.\ 3.2.7]{CLS}.
\end{Par}

\begin{Par}
If $k = \mathbf C$, we write $\Sh\cons[S](X)$ for the category of $S$-constructible sheaves of sets on $X$, i.e.\ sheaves whose restriction to each stratum $X_s$ is locally constant. Likewise, write $\Sh\lc(X)$ for the locally constant sheaves on $X$. Recall that in topology, there is no hypothesis on finiteness of the stalks of a constructible or locally constant sheaf.

If $k$ is any field, we write $\Sh\cons[S](X_\et)$ (resp.\ $\Sh\cons[S]^t(X_\et)$) for the category of sheaves of sets on $X_\et$ whose restriction to each stratum $X_s$ is locally constant with finite stalks (resp.\ locally constant with finite stalks and tamely ramified at infinity). Likewise, write $\Sh\lc(X_\et)$ (resp.\ $\Sh\lc^t(X_\et)$) for the (tamely ramified) finite locally constant sheaves.
\end{Par}

\begin{Par}\label{Par affine}
If $X = \Spec k[M]$ is a split affine toric variety, there is a canonical exact sequence of monoids $0 \to M^\times \to M \to \overline M \to 0$, where $M^\times$ is the subgroup of invertible elements in $M$ (the \emph{group of units} of $M$) \cite[Ch.\ I, \S1.3]{Ogus}. If $Y = \Spec k[M^\times]$, there is a natural projection $\pi \colon X \twoheadrightarrow Y$ induced by $M^\times \hookrightarrow M$. It has a section $i \colon Y \hookrightarrow X$ induced by the map $M \to M^\times \cup \{0\}$ taking $M \setminus M^\times$ to $0$, inserting $Y$ as the unique closed stratum of $X$.

By \ref{Par toric stratification}, this applies in particular to the affine toric variety $X_{\geq s}$ inside any toric variety $X$, where we take $Y$ to be $X_s \hookrightarrow X_{\geq s}$. This gives a retraction $\pi \colon X_{\geq s} \to X_s$ to the inclusion.
\end{Par}

\begin{Lemma}\label{Lem A1 homotopy}
Let $X = \Spec k[M]$ be a split affine toric variety, and let $Y$ be as in \ref{Par affine}. Then the maps $i \colon Y \hookrightarrow X$ and $\pi \colon Y \twoheadrightarrow X$ are mutually inverse naive $\mathbf A^1$-homotopy equivalences.
\end{Lemma}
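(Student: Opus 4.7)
The identity $\pi \circ i = \id_Y$ is immediate from the construction in \ref{Par affine}, since $i$ is defined as a section of $\pi$. The substantive claim is thus the existence of a naive $\mathbf{A}^1$-homotopy from $i \circ \pi$ to $\id_X$, i.e.\ a morphism $H \colon X \times \mathbf{A}^1 \to X$ restricting to $i \circ \pi$ and $\id_X$ at the points $0, 1 \in \mathbf{A}^1$. On algebras this amounts to a $k$-algebra map $k[M] \to k[M][t]$ equal to the identity modulo $(t-1)$ and to the composite $k[M] \twoheadrightarrow k[M^\times] \hookrightarrow k[M]$ modulo $(t)$.

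My plan is to twist each monomial $m \in M$ by a power of $t$ whose exponent vanishes precisely on the units. Concretely, given a monoid homomorphism $n \colon M \to \mathbf{N}$ with zero locus exactly $M^\times$, I define the algebra map by $m \mapsto m\,t^{n(m)}$. This rule extends to a well-defined $k$-algebra homomorphism because $n$ is a monoid map, so $(m\,t^{n(m)})(m'\,t^{n(m')}) = mm'\,t^{n(mm')}$. Specialising to $t = 1$ recovers the identity, while $t = 0$ kills every element of $M \setminus M^\times$ and preserves the units, which is exactly $i \circ \pi$ at the level of algebras.

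The heart of the argument is therefore the existence of such $n$. Because $n$ must vanish on $M^\times$ and be strictly positive elsewhere, it factors through a monoid map $\overline n \colon \overline M \to \mathbf{N}$ that is strictly positive on $\overline M \setminus \{0\}$, where $\overline M = M/M^\times$ is as in \ref{Par affine}. This quotient is finitely generated, cancellative, sharp, and has torsion-free Grothendieck group. I would then pass to the saturation $\overline M^{\operatorname{sat}} \subseteq \overline M\gp \otimes_{\mathbf Z} \mathbf{R}$, which forms the set of lattice points of a rational polyhedral cone $\sigma$. The small technical step that I expect to be the main obstacle is verifying that $\overline M^{\operatorname{sat}}$ inherits sharpness (so that $\sigma$ is strongly convex), which uses the torsion-freeness of $\overline M\gp$ to upgrade sharpness across the saturation. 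Granted this, any integral functional on $\overline M\gp$ lying in the interior of the dual cone $\sigma^\vee$ is strictly positive on $\sigma \setminus \{0\}$, and pulling back along $M \twoheadrightarrow \overline M$ produces the desired $n$; plugging this into the construction above then yields the required homotopy.
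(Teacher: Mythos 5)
Your proof is correct and follows essentially the same route as the paper: the homotopy $m \mapsto m\,t^{n(m)}$ for a monoid homomorphism $n \colon M \to \mathbf N$ with $n^{-1}(0) = M^\times$ is exactly the map used there. The only difference is that the paper simply cites \cite[Ch.\ I, Prop.\ 2.2.1]{Ogus} for the existence of such an $n$, whereas you sketch its (standard) proof via the strongly convex cone spanned by $\overline M$.
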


\begin{proof}
By \cite[Ch.\ I, Prop.\ 2.2.1]{Ogus}, we may choose a homomorphism $\phi \colon M \to \mathbf N$ with $\ker \phi = M^\times$. Then the map $M \to M \oplus \mathbf N$ given by $m \mapsto (m,\phi(m))$ gives a map $h \colon \mathbf A^1 \times X \to X$ whose restriction to $0 \times X$ is $i \circ \pi$ and whose restriction to $1 \times X$ is the identity on $X$. The other composition $\pi \circ i$ is the identity on $Y$.
\end{proof}

Thus, $\pi$ induces an isomorphism on fundamental groups, at least in the topological setting. The same should hold for tame fundamental groups in the \'etale setting, but $\mathbf A^1$-invariance of tame fundamental groups of singular varieties does not yet appear in the literature. Luckily, in this case, the statement will roll out of the proof of \autoref{Lem toric}.

\begin{Par}\label{Par section}
If $\pi \colon X \to Y$ is a morphism of schemes with a section $i \colon Y \to X$, then there is a canonical natural transformation $\pi_* \to i^*$ of functors $\Sh(X_\et) \to \Sh(Y_\et)$. Indeed, recall that $(i^*\mathscr F)(V)$ is the sheafification of the colimit of $\mathscr F(W)$ over pairs $(V \to W \to X)$ of an \'etale neighbourhood $W \to X$ and a map $V \to W$. Taking $W = \pi^{-1}(V) = V \times_Y X$ gives such a neighbourhood, defining a natural transformation $\pi_* \to i^*$. The same happens in the complex topology.
\end{Par}

\section{Complex toric varieties}
We first prove the analogue of \autoref{Thm main} for complex toric varieties (via topological fundamental groups). This is very close to the result of Braden and Lunts \cite[Thm.\ 4.3.2]{BradenLunts}. In fact, in the topological case, the method will not require knowing a priori that an exodromy theorem holds: it rolls out of the proof together with the following easy lemma (which is a variant of one of the key tools in \cite{vDdBWolf}).

\begin{Lemma}\label{Lem basis}
Let $\mathscr X$ be a topos, and assume there exists a small full subcategory $\mathscr C \subseteq \mathscr X$ that generates $\mathscr X$ such that, for every effective epimorphism $\coprod_{i \in I} V_i \twoheadrightarrow U$ with $U\in \mathscr C$, there exists $i \in I$ and a section $U \to V_i$. Then the Yoneda functor $\mathscr X \to \Fun(\mathscr C\op,\Set)$ is an equivalence.
\end{Lemma}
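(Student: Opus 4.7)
The plan is to identify $\mathscr X$ with the category of sheaves on the site $(\mathscr C, J)$, for a suitable Grothendieck topology $J$ on $\mathscr C$ inherited from $\mathscr X$, and then to show that the hypothesis forces $J$ to be the chaotic topology, so that sheaves coincide with presheaves.

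Since $\mathscr C$ is a small, full, generating subcategory of the topos $\mathscr X$, a standard form of Giraud's theorem yields that the restricted Yoneda functor
\[
y \colon \mathscr X \longrightarrow \Fun(\mathscr C\op, \Set), \qquad X \longmapsto \Hom_{\mathscr X}(-, X)\big|_{\mathscr C}
\]
is fully faithful with essential image $\Sh(\mathscr C, J)$, where the covering sieves for $J$ on $U \in \mathscr C$ are exactly those generated by families $\{V_i \to U\}_{i \in I}$ with $V_i \in \mathscr C$ such that $\coprod_i V_i \twoheadrightarrow U$ is an effective epimorphism in $\mathscr X$.

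Next I would show that $J$ is chaotic. Let $R$ be a covering sieve on $U \in \mathscr C$, with generating family $\{V_i \to U\}_{i \in I}$ as above. By hypothesis, some $i_0 \in I$ admits a section $s \colon U \to V_{i_0}$, whence $\id_U = (V_{i_0} \to U) \circ s$ lies in $R$. Hence $R$ is the maximal sieve, and $J$ is the chaotic topology on $\mathscr C$. For the chaotic topology every presheaf is a sheaf, so $\Sh(\mathscr C, J) = \Fun(\mathscr C\op, \Set)$, and combining with the previous step yields the desired equivalence.

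The main piece of infrastructure is the Giraud-style identification $\mathscr X \simeq \Sh(\mathscr C, J)$: this uses that $\mathscr C$ generates $\mathscr X$ to write every $X \in \mathscr X$ as a canonical colimit of objects of $\mathscr C$, and it is where the topos axioms (existence of effective epimorphisms, descent) do the real work. Once that is in place, the argument above is essentially formal; the hypothesis on sections is tailored precisely so that the sheafification step trivialises, leaving the full presheaf category as the answer.
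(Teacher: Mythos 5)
Your proof is correct and is essentially the paper's argument in different packaging: both identify $\mathscr X$ with sheaves on a site built from $\mathscr C$ and then use the section hypothesis to show that the relevant induced topology is trivial, so that sheaves are just presheaves. The only difference is that you invoke the comparison lemma for the generating subcategory $\mathscr C$ directly (in the sieve-theoretic form that does not require finite limits in $\mathscr C$, e.g.\ SGA~4, Exp.~III, Th.~4.1), whereas the paper manufactures this from SGA~4, Exp.~IV, 1.2.3 by first closing $\mathscr C$ under finite limits to get $\mathscr D$ with $\mathscr X \simeq \Sh_{\operatorname{can}}(\mathscr D)$ and then applying the Stacks project criterion to the inclusion $\mathscr C \hookrightarrow \mathscr D$ --- the cocontinuity condition checked there is precisely your observation that every covering sieve contains the identity.
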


This is similar to the classical lemma that sheaves on a topological space can be computed as sheaves on a basis: the objects in $\mathscr C$ correspond to the opens in the basis, and the hypothesis on existence of sections means that there is no Grothendieck topology to consider. (The analogous statement also holds for hypercomplete $\infty$-topoi, as an arbitrary object has a hypercover by objects in a given generating set.)

\begin{proof}[Proof of Lemma]
As in \cite[Exp.\ IV, 1.2.3]{SGA4I}, form the smallest full subcategory $\mathscr D$ of $\mathscr X$ that contains $\mathscr C$ and is closed under finite limits. It is still a small category since $\mathscr X$ is locally small, and the Yoneda functor $\mathscr X \to \Sh_{\operatorname{can}}(\mathscr D)$ is an equivalence [\emph{loc.\ cit.}]. We have an inclusion functor $i \colon \mathscr C \to \mathscr D$, where we equip $\mathscr D$ with the canonical topology and $\mathscr C$ with the trivial topology. It suffices to show that $i$ satisfies the hypotheses of \cite[Tag \href{https://stacks.math.columbia.edu/tag/03A0}{03A0}]{Stacks}. Continuity \cite[Tag \href{https://stacks.math.columbia.edu/tag/00WV}{00WV}]{Stacks} holds vacuously, since the only covers in $\mathscr C$ are isomorphisms $\{V \stackrel\sim\to U\}$. Cocontinuity \cite[Tag \href{https://stacks.math.columbia.edu/tag/00XJ}{00XJ}]{Stacks} holds by the hypothesis on existence of sections. Conditions (3) and (4) are automatic since $i$ is fully faithful. Finally, if $\mathscr F\in \mathscr X$, then the natural map $\coprod_{U \in \mathscr C} \coprod_{\alpha \in \Hom(U,\mathscr F)} U \to \mathscr F$ is an effective epimorphism since $\mathscr C$ generates $\mathscr X$, which proves (5).
\end{proof}

This gives the following way to prove exodromy theorems (which is similar to the key idea of \cite{HainePortaTeyssier} since objects representing a fibre functor are exactly the atomic objects):

\begin{Cor}\label{Cor exodromy}
Let $X$ be a topological space, let $S$ be a finite poset endowed with the Alexandroff topology, and let $X \to S$ be a continuous map with connected fibres. For each $s \in S$, choose a point $x_s \in X_s$, and assume that each of the fibre functors $F_{x_s} \colon \Sh\cons[S](X) \to \Set$ is representable. Let $\Pi_1(X,S)$ be the full subcategory of $\Fun\big(\Sh\cons[S](X),\Set\big)$ on the functors $F_{x_s}$. Then the evaluation functor
\begin{align*}
\ev \colon \Sh\cons[S](X) &\to \Fun\big(\Pi_1(X,S),\Set\big) \\
\mathscr G &\mapsto \big(F_{x_s} \mapsto F_{x_s}(\mathscr G)\big).
\end{align*}
is an equivalence.
\end{Cor}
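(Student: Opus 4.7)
The plan is to apply \autoref{Lem basis} to $\mathscr X = \Sh\cons[S](X)$, taking $\mathscr C \subseteq \mathscr X$ to be the full subcategory spanned by the representing objects $P_s$ of the fibre functors $F_{x_s}$. Since $S$ is finite, $\mathscr X$ inherits all (co)limits from $\Sh(X)$ and is therefore a topos in which each $F_{x_s}$ is exact and, in particular, preserves effective epimorphisms. The ordinary Yoneda lemma gives a natural identification $\Hom_{\Fun(\mathscr X,\Set)}(F_{x_s},F_{x_t}) \cong \Hom_{\mathscr X}(P_t,P_s)$, yielding a canonical equivalence $\Pi_1(X,S) \simeq \mathscr C\op$ under which $F_{x_s} \leftrightarrow P_s$. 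Under this identification, $\ev$ agrees with the restricted Yoneda embedding $\mathscr G \mapsto \Hom_{\mathscr X}(-,\mathscr G)|_{\mathscr C}$ of \autoref{Lem basis}, so it suffices to verify the two hypotheses of that lemma.

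The splitting hypothesis is formal: given an effective epimorphism $\coprod_{i \in I} V_i \twoheadrightarrow P_s$ in $\mathscr X$, the exact functor $F_{x_s} = \Hom_{\mathscr X}(P_s,-)$ turns it into a surjection $\coprod_i \Hom_{\mathscr X}(P_s,V_i) \twoheadrightarrow \Hom_{\mathscr X}(P_s,P_s)$, and any preimage of $\id_{P_s}$ provides a section $P_s \to V_i$ for some $i$. For generation, I would verify the equivalent (standard topos-theoretic) condition that $\{F_{x_s}\}_{s \in S} = \{\Hom_{\mathscr X}(P_s,-)\}_{s \in S}$ is jointly conservative on $\mathscr X$. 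Suppose $f \colon \mathscr G \to \mathscr G'$ induces a bijection at $x_s$ for every $s \in S$. Since $X_s$ is connected and both $\mathscr G|_{X_s}$ and $\mathscr G'|_{X_s}$ are locally constant, a map between them is an isomorphism as soon as its action on one stalk is, so $f|_{X_s}$ is an isomorphism for every $s$; then $f$ itself is an isomorphism by the usual local-to-global argument for $S$-constructible sheaves on a finite Alexandroff-stratified space.

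The main technical point is this last reduction to strata, together with the foundational verification that $\mathscr X$ really is a topos and that the $F_{x_s}$ behave as stalk functors should (in particular, they are exact and hence preserve effective epimorphisms). Once these standard facts are in place, both hypotheses of \autoref{Lem basis} are essentially immediate and the corollary follows at once.
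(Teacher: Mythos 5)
Your proposal is correct and follows essentially the same route as the paper's own proof: identify $\Pi_1(X,S)$ with $\mathscr C\op$ via Yoneda, obtain the section by applying the corepresentable, coproduct-preserving, epimorphism-preserving functor $F_{x_s}$ to the identity of the representing object, and deduce generation from joint conservativity of the $F_{x_s}$ using connectedness of the strata. The only cosmetic difference is that the paper phrases the conservativity step as ``every fibre functor $F_y$ with $y \in X_s$ is isomorphic to $F_{x_s}$'' rather than arguing stratum by stratum, but the content is identical.
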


\begin{proof}
For each $s \in S$, let $\mathscr F_s$ be a constructible sheaf representing the fibre functor $F_{x_s}$. Let $\mathscr X$ be the topos $\Sh\cons[S](X)$ of $S$-constructible sheaves, and let $\mathscr C$ be the full subcategory on the sheaves $\mathscr F_s$. By the Yoneda lemma, the functor $\mathscr C\op \to \Fun(\mathscr X,\Set)$ identifies $\mathscr C\op$ with $\Pi_1(X,S)$ (in particular, $\Pi_1(X,S)$ is small). Since each stratum $X_s$ is connected, any fibre functor $F_y$ for $y \in X_s$ is isomorphic to $F_{x_s}$, so the functors $F_{x_s}$ are jointly conservative. This means that the sheaves $\mathscr F_s$ generate $\mathscr X$ \cite[Exp.\ I, Def.\ 7.1 and Exp.\ II, 4.9]{SGA4I}. If $\coprod_{i \in I} \mathscr G_i \twoheadrightarrow \mathscr F_s$ is an epimorphism, then the map
\[
\coprod_{i\in I} \Hom(\mathscr F_s,\mathscr G_i) = \coprod_{i \in I} F_{x_s}\big(\mathscr G_i\big) = F_{x_s}\Big( \coprod_{i \in I} \mathscr G_i \Big) \to F_{x_s}(\mathscr F_s) = \Hom(\mathscr F_s,\mathscr F_s)
\]
is surjective. Applying this to the identity $\mathscr F_s \to \mathscr F_s$ gives an element $i \in I$ and a section $\mathscr F_s \to \mathscr G_i$, so the result follows from \autoref{Lem basis}.
\end{proof}

\begin{Rmk}
This is very similar to how fundamental groups are often computed: if $X$ is path connected, locally path connected, and semi-locally simply connected, then the universal cover $\widetilde X \to X$ represents the fibre functor $F_x \colon \Sh\lc(X) \to \Set$. The fundamental group is computed as the fibre $F_x(\widetilde X) = \Hom(h_{\widetilde X},F_x) = \Hom(F_x,F_x)$, so the full subcategory on $F_x$ is the one-object category with endomorphisms given by $\pi_1(X)$.
\end{Rmk}

The computations of exodromy for toric varieties will work in the same way, representing the fibre functors $\Sh\cons[S](X) \to \Set$ by explicit constructible sheaves (or rather, their espaces \'etal\'es, giving a geometric interpretation analogous to the universal cover).

The key computation is the following lemma, whose analogue for the derived category of constructible sheaves is \cite[Lem.\ 4.1.1]{BradenLunts}. Note that a vast generalisation \cite[Lem.\ A.9.12]{LurieHA} is also one of the key local computations in Lurie's $\infty$-categorical exodromy theorem for conically stratified spaces. What allows us to compute $\Pi_1(X,S)$ globally in the toric case is that the stratification is `globally conical'.

\begin{Lemma}\label{Lem BL}
Let $X$ be a normal toric variety over $\mathbf C$.
\begin{enumerate}
\item\label{item C constructible} A sheaf $\mathscr F \in \Sh(X)$ is $S$-constructible if and only if $\mathscr F$ is a finite limit of sheaves of the form $j_{s,*}\mathscr G$, where $j_s \colon X_s \hookrightarrow X$ is some (locally closed) stratum and $\mathscr G \in \Sh\lc(X_s)$.
\item\label{item C affine} If $X$ is affine and $i \colon Y \hookrightarrow X$ and $\pi \colon X \twoheadrightarrow Y$ are as in \ref{Par affine}, then the natural transformation $\pi_* \to i^*$ of \ref{Par section} is an isomorphism when restricted to $S$-constructible sheaves.
\end{enumerate}
\end{Lemma}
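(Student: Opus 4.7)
The plan is to prove (a) first and then derive (b) from it.

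For the easy direction of (a), I would check that each $j_{s,*}\mathscr G$ with $\mathscr G\in\Sh\lc(X_s)$ is itself $S$-constructible: the restriction to $X_t$ is terminal when $t\not\leq s$ (as disjoint strata give $X_t\cap\overline{X_s}=\varnothing$), equals $\mathscr G$ when $t=s$, and is locally constant when $t<s$. The last case uses the local toric structure: around any $y\in X_t$, the affine open $X_{\geq t}$ provides a standard toric model in which the restriction of $j_{s,*}\mathscr G$ is determined by torus-equivariant combinatorial data independent of $y$. Closure under finite limits is immediate, since each restriction $j_t^*$ is a topos inverse image (hence left exact) and locally constant sheaves of sets are closed under finite limits.

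For the converse direction, I would induct on $|S|$. Pick a minimal $s\in S$, with closed inclusion $i_s\colon X_s\hookrightarrow X$ and open complement $j\colon X\setminus X_s\hookrightarrow X$. The gluing of sheaves of sets along this decomposition writes any $\mathscr F$ as the fibre product
\[
\mathscr F \;\cong\; j_*j^*\mathscr F \;\times_{i_{s,*}i_s^*j_*j^*\mathscr F}\; i_{s,*}i_s^*\mathscr F,
\]
which is a finite limit. By induction applied to $j^*\mathscr F$ on $X\setminus X_s$ (which is constructible for fewer strata), together with the fact that $j_*$ is limit-preserving and carries pushforwards from the complement into pushforwards on $X$, the factor $j_*j^*\mathscr F$ becomes a finite limit of sheaves $j_{s',*}\mathscr L_{s'}$. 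The factor $i_{s,*}i_s^*\mathscr F = j_{s,*}(\mathscr F|_{X_s})$ is of the required form since $\mathscr F|_{X_s}$ is locally constant. For the middle factor, $i_s^*j_*j^*\mathscr F$ must be locally constant on $X_s$, which follows from the same local toric analysis as in the easy direction.

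For part (b), both $\pi_*$ (a right adjoint) and $i^*$ (a topos inverse image) preserve finite limits, so by (a) it suffices to check the natural map $\pi_*j_{s,*}\mathscr L\to i^*j_{s,*}\mathscr L$ is an isomorphism for each stratum $s$ and each $\mathscr L\in\Sh\lc(X_s)$. Taking stalks at $y\in Y$, this reduces to showing
\[
\colim_{V\ni y,\,V\subseteq Y}\mathscr L\bigl(\pi^{-1}(V)\cap X_s\bigr) \;\xrightarrow{\ \sim\ }\; \colim_{U\ni y,\,U\subseteq X}\mathscr L\bigl(U\cap X_s\bigr).
\]
Since $\mathscr L$ is locally constant, this is a homotopy-theoretic comparison. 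The restriction $\pi|_{X_s}\colon X_s\twoheadrightarrow Y$ is a surjective homomorphism of tori (dual to $M^\times\hookrightarrow F^{\operatorname{gp}}$, where $F$ is the face defining $X_s$), with kernel a subtorus $K_s$; for small $V$, the tube $\pi^{-1}(V)\cap X_s$ is homotopy equivalent to $K_s$. I would then use the $\mathbf A^1$-homotopy of \autoref{Lem A1 homotopy} --- which preserves both fibres of $\pi$ and the stratum $X_s$ for every $t>0$ --- to exhibit small toric $U\cap X_s$ as also having the homotopy type of $K_s$, compatibly with the inclusion $U\cap X_s\hookrightarrow\pi^{-1}(V)\cap X_s$.

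The main obstacle is the local-constancy input appearing in both directions of (a): for arbitrary stratifications, neither $(j_{s,*}\mathscr G)|_{X_t}$ nor $i_s^*j_*j^*\mathscr F$ need be locally constant. In our setting this relies essentially on the conical toric structure and the torus action, made explicit by \autoref{Lem A1 homotopy}; the homotopy-theoretic input for (b) is really the same geometric phenomenon in disguise.
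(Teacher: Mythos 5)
Your overall strategy is the paper's: reduce (b) to the generators $j_{s,*}\mathscr G$ using that $\pi_*$ and $i^*$ preserve finite limits, prove the converse direction of (a) by induction on $\lvert S\rvert$ via the glueing square for a minimal closed stratum, and establish the key comparison by a tube/homotopy computation around the closed stratum of an affine chart. Your stalkwise formulation of (b) and the identification of $\pi^{-1}(V)\cap X_s$ with $V\times K_s$ for a subtorus $K_s$ are fine, and your use of the flow from \autoref{Lem A1 homotopy} to produce flow-invariant small neighbourhoods is the same geometric input the paper uses (stated there as the existence of a cofinal system of neighbourhoods $W$ of $V$ with $U\cap W\hookrightarrow U\cap\pi^{-1}(V)$ a homotopy equivalence).

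There is, however, one genuine gap, and it comes from inverting the logical order. You propose to prove (a) before (b), but the only justification offered for the crucial step of (a) --- that $(j_{s,*}\mathscr G)|_{X_t}$ is locally constant for $t<s$, and likewise that $i_s^*j_*j^*\mathscr F$ is locally constant in the induction --- is that the restriction is \emph{determined by torus-equivariant combinatorial data independent of} $y$. That is not an argument: having abstractly isomorphic stalks (or isomorphic local models) at every point does not give local constancy; one needs the restriction maps to nearby sections to be bijections. The paper obtains this precisely from the affine tube computation: working on $X_{\geq t}\cap X_{\leq s}$, the isomorphism $\pi_*\to i^*$ for the single sheaf $j_{s,*}\mathscr G$ (proved \emph{before} knowing that sheaf is constructible) identifies $j_t^*j_{s,*}\mathscr G$ with $(\pi\circ j_s)_*\mathscr G$, and $\pi\circ j_s\colon X_s\to X_t$ is a surjection of tori, hence a fibre bundle, so the pushforward of a locally constant sheaf along it is locally constant. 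In other words, the computation you placed inside (b) must be carried out first, for the generators $j_{s,*}\mathscr G$ alone, and then fed into both directions of (a); only afterwards does (b) for general $S$-constructible sheaves follow by finite-limit-preservation. Your closing paragraph essentially concedes that the two inputs are ``the same phenomenon in disguise'', but as written the proof of (a) rests on an assertion rather than on that computation, so the dependency needs to be reversed for the argument to close up.
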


We include a proof because the \'etale case (\autoref{Lem toric} and \autoref{Cor toric}) will follow the same strategy (where the argument is more difficult as we do not have access to small open balls).

\begin{proof}
Suppose $X$ is affine, and let $j_s \colon X_s \hookrightarrow X$ be the inclusion of a stratum. We first prove \ref{item C affine} for $\mathscr F = j_{s,*} \mathscr G$ for $\mathscr G \in \Sh\lc(X_s)$ (we do not yet know that $\mathscr F$ is $S$-constructible). The result only depends on the pushforward of $\mathscr G$ to $X_{\leq s}$, so we may assume $X_s = U$ is the open stratum, with inclusion $j \colon U \hookrightarrow X$. If $V \subseteq Y$ is open, then the set of open neighbourhoods of $V$ in $X$ contains a basis consisting of opens $W \subseteq \pi^{-1}(V)$ for which the inclusion $U \cap W \hookrightarrow U \cap \pi^{-1}(V)$ is a homotopy equivalence. Thus, in the colimit
\[
(i^*j_*\mathscr G)(V) = \colim_{\substack{\longrightarrow \\ V \subseteq W}} \mathscr G(U \cap W),
\]
there is a cofinal system where the restriction map $\mathscr G(U \cap \pi^{-1}(V)) \to \mathscr G(U \cap W)$ is a bijection, so we conclude that the map $(\pi_*j_*\mathscr G)(V) \to (i^*j_*\mathscr G)(V)$ is a bijection. This proves \ref{item C affine} for $j_{s,*} \mathscr G$.

Now let $X$ be arbitrary, and let $\mathscr F = j_{s,*}\mathscr G$ as in \ref{item C constructible}. If $t \in S$, we need to show that $j_t^*j_{s,*} \mathscr G$ is locally constant on $X_t$. If $t \leq s$, we may replace $X$ by $X_{\geq t} \cap X_{\leq s}$, and then the case treated above shows that $j_t^*j_{s,*}\mathscr G = (\pi \circ j_s)_*\mathscr G$, which is locally constant since $\pi \circ j_s \colon X_s \to X_t$ is a fibre bundle and $\mathscr G$ is locally constant. If $t \not\leq s$, we have $j_t^*j_{s,*}\mathscr G = *$, which is locally constant. As $\Sh\cons[S](X)$ is stable under finite limits, this shows one inclusion. The converse follows by induction on $\lvert S \rvert$: if $i \colon Z \hookrightarrow X$ is any minimal stratum and $j \colon U \hookrightarrow X$ is the complement, then every $S$-constructible sheaf $\mathscr F$ sits in a pullback square
\[
\begin{tikzcd}
\mathscr F \ar{r}\ar{d} & j_*j^*\mathscr F \ar{d} \\
i_*i^*\mathscr F \ar{r} & i_*i^*j_*j^* \mathscr F\punct{,}
\end{tikzcd}
\]
where we know that $j^*\mathscr F$ is a finite limit of sheaves of the form $j_{s,*}\mathscr G$ by induction. Then $i^*j_*$ takes $j^*\mathscr F$ to a locally constant sheaf since it preserves finite limits and takes each $j_{s,*}\mathscr G$ to a locally constant sheaf, which finishes the proof of \ref{item C constructible}. Then \ref{item C affine} follows since it holds for  $j_{s,*}\mathscr G$ and $\pi_*$ and $i^*$ preserve finite limits.
\end{proof}

\begin{Thm}\label{Thm complex}
Let $X$ be a normal toric variety over $\mathbf C$, and let $X \to S$ be the toric stratification. There is a canonical equivalence
\[
\Sh\cons[S](X) \simeq \Fun\big(\Pi_1(X,S),\Set\big),
\]
where $\Pi_1(X,S)$ is the category whose objects are $s \in S$ and whose morphisms are
\[
\Hom(s,t) = \begin{cases} \pi_1(X_{\geq s}), & s \leq t, \\ \varnothing, & s \not\leq t.\end{cases}
\]
Composition $\Hom(s,t) \times \Hom(r,s) \to \Hom(r,t)$ for $r \leq s \leq t$ is induced by multiplication in $\pi_1(X_{\geq r})$ under the map $\pi_1(X_{\geq s}) \to \pi_1(X_{\geq r})$. Moreover, $\pi_1(X_{\geq s}) \cong \pi_1(X_s) \cong \mathbf Z^{\dim X_s}$.
\end{Thm}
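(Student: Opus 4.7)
The plan is to apply \autoref{Cor exodromy}: we must (i) represent each fibre functor $F_{x_s}$ by an $S$-constructible sheaf $\mathscr{F}_s$ on $X$, and then (ii) compute the morphism category on the $F_{x_s}$. For (i), let $\mathscr{M}_s \in \Sh\lc(X_s)$ be the locally constant sheaf associated to the universal cover of $X_s \cong (\mathbf{C}^*)^{\dim X_s}$, so that $\mathscr{M}_s$ represents the fibre at $x_s$ on $\Sh\lc(X_s)$. With $j_s \colon X_{\geq s} \hookrightarrow X$ the open inclusion and $\pi_s \colon X_{\geq s} \to X_s$ the retraction from \ref{Par affine}, set $\mathscr{F}_s := j_{s,!}\,\pi_s^*\mathscr{M}_s$. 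This is $S$-constructible: on the open $X_{\geq s}$ it equals the locally constant sheaf $\pi_s^*\mathscr{M}_s$, and on each stratum $X_t$ with $t \not\geq s$ it is the empty (hence constant) sheaf. Chaining the adjunctions $j_{s,!} \dashv j_s^*$ and $\pi_s^* \dashv \pi_{s,*}$ yields, for any $\mathscr{G} \in \Sh\cons[S](X)$,
\[
\Hom(\mathscr{F}_s,\mathscr{G}) \;=\; \Hom_{\Sh(X_s)}\bigl(\mathscr{M}_s,\,\pi_{s,*}(\mathscr{G}|_{X_{\geq s}})\bigr),
\]
and by \autoref{Lem BL}\ref{item C affine} the sheaf $\pi_{s,*}(\mathscr{G}|_{X_{\geq s}}) = i_s^*(\mathscr{G}|_{X_{\geq s}}) = \mathscr{G}|_{X_s}$ is locally constant, so the right-hand side is computed in $\Sh\lc(X_s)$ and equals $(\mathscr{G}|_{X_s})_{x_s} = \mathscr{G}_{x_s}$ by the universal property of $\mathscr{M}_s$.

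Applying \autoref{Cor exodromy} then produces the equivalence $\Sh\cons[S](X) \simeq \Fun(\Pi_1(X,S),\Set)$. For (ii), contravariant Yoneda identifies the morphisms as
\[
\Hom_{\Pi_1(X,S)}(s,t) \;=\; \Hom_{\Sh\cons[S](X)}(\mathscr{F}_t,\mathscr{F}_s) \;=\; (\mathscr{F}_s)_{x_t}.
\]
If $s \not\leq t$ then $x_t \notin X_{\geq s}$ and this is empty; otherwise it is the stalk of the locally constant sheaf $\pi_s^*\mathscr{M}_s$ at $x_t$, which is a torsor under $\pi_1(X_s,x_s) \cong \pi_1(X_{\geq s},x_s)$ (the isomorphism coming from \autoref{Lem A1 homotopy}), hence in bijection with $\pi_1(X_{\geq s})$ after choosing a path in $X_{\geq s}$ from $x_s$ to $x_t$. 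The final formula $\pi_1(X_s) \cong \mathbf{Z}^{\dim X_s}$ is the classical computation of $\pi_1$ of the complex torus $(\mathbf{C}^*)^{\dim X_s}$.

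The main obstacle I anticipate is verifying the composition law. For $r \leq s \leq t$, upward closedness gives $X_{\geq s} \subseteq X_{\geq r}$, inducing a map $\pi_1(X_{\geq s}) \to \pi_1(X_{\geq r})$. Unwinding the Yoneda identifications above, the composite of $\alpha \in (\mathscr{F}_s)_{x_t}$ and $\beta \in (\mathscr{F}_r)_{x_s}$ in $(\mathscr{F}_r)_{x_t}$ equals $g_{x_t}(\alpha)$, where $g \colon \mathscr{F}_s \to \mathscr{F}_r$ is the unique morphism whose stalk at $x_s$ sends the tautological element to $\beta$. Restricting $g$ to the open $X_{\geq r}$, where both sheaves are locally constant, and tracking the induced map on stalks along chosen paths should identify this composition with multiplication in $\pi_1(X_{\geq r})$ precomposed with the transition map $\pi_1(X_{\geq s}) \to \pi_1(X_{\geq r})$, as required.
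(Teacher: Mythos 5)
Your proposal is correct and takes essentially the same route as the paper: you represent each fibre functor by $j_{\geq s,!}\pi_s^*$ applied to the universal cover of $X_s$, invoke \autoref{Cor exodromy}, and compute the $\Hom$ sets by Yoneda, using \autoref{Lem BL}\ref{item C affine} for the adjunction and \autoref{Lem A1 homotopy} to identify $\pi_1(X_{\geq s})$ with $\pi_1(X_s)$. The composition law, which you flag as the remaining obstacle, is likewise left implicit in the paper's proof, so your sketch there is not a gap relative to the paper's own standard.
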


\begin{proof}\renewcommand{\qedsymbol}{}
For each $s \in S$, the fibre functor $\Sh\lc(X_s) \to \Set$ is represented by the universal cover $\widetilde{X_s} \to X_s$. Write $i \colon X_s \to X_{\geq s}$ for the closed immersion, and $j = j_{\geq s} \colon X_{\geq s} \to X$ for the open immersion, so that $j_s = j\circ i$. If $\pi \colon X_{\geq s} \to X_s$ is the map of \ref{Par affine}, then the pullback $i^* \colon \Sh\cons[S](X_{\geq s}) \to \Sh\lc(X_s)$ has a left adjoint $\pi^*$ by \autoref{Lem BL}. Moreover, $j_!$ is a left adjoint to $j^*$. Thus, for every $\mathscr F \in \Sh\cons[S](X)$, we have
\[
F_{x_s}(\mathscr F) = F_{x_s}(i^*j^*\mathscr F) = \Hom_{X_s}\big(\widetilde{X_s},i^*j^*\mathscr F\big) = \Hom_X\big(j_!\pi^*\widetilde{X_s},\mathscr F\big),
\]
so $F_{x_s}$ is represented by $j_!\pi^*\widetilde{X_s}$. The exodromy correspondence follows by \autoref{Cor exodromy}, and it remains to compute the $\Hom$ sets in $\Pi_1(X,S)$. Since $\pi \colon X_{\geq s} \to X_s$ is a deformation retraction of the inclusion $i \colon X_s \hookrightarrow X_{\geq s}$ by \autoref{Lem A1 homotopy}, it induces an isomorphism fundamental groups, so $\pi^*\widetilde{X_s}$ is the universal cover $\widetilde{X_{\geq s}}$. Now we conclude by the Yoneda lemma: if $s, t \in P$, then
\begin{align*}
\Hom\big(F_{x_s},F_{x_t}\big) = \Hom\big(h_{j_{\geq s,!}\widetilde{X_{\geq s}}},F_{x_t}\big) = F_{x_t}\big(j_{\geq s,!}\widetilde{X_{\geq s}}\big) = \begin{cases}\pi_1(X_{\geq s}), & s \leq t, \\ \varnothing, & s \not\leq t. \end{cases} \tag*{$\square$}
\end{align*}
\end{proof}

\begin{Rmk}\label{Rmk infty}
The same proof works $\infty$-categorically as well (with basically no modifications), showing that $\Pi_\infty(X,S) = \Pi_1(X,S)$. Indeed, \autoref{Lem BL} carries through unchanged, and the universal cover $\widetilde{X_s} \to X_s$ still represents the fibre functor $\Sh\lc(X_s,\mathscr S) \to \mathscr S$ since $\widetilde{X_s}$ is contractible. The method of \autoref{Lem basis} and \autoref{Cor exodromy} only gives an exodromy correspondence for $S$-constructible sheaves in the $\infty$-topos of hypersheaves on $X$, but Lurie's version \cite[Thm.\ A.9.3]{LurieHA} upgrades this to the full $\infty$-topos. The question of hypercompletion was studied in more detail in \cite[\S2.4]{HainePortaTeyssier}.
\end{Rmk}

\section{\'Etale sheaves on split toric varieties}
We now modify the arguments of the previous section to \'etale sheaves on split toric varieties over an arbitrary field $k$. In short, the main differences in the proof are:
\begin{itemize}
\item The computation of \autoref{Lem BL}\ref{item C affine} in the case $\mathscr F = j_{s,*} \mathscr G$ requires a different argument, which is the hardest part of the proof.
\item The fibre functors $\Sh\cons[S]^t(X_\et) \to \Fin$ are no longer representable, but merely pro-representable. Thus, instead of using \autoref{Cor exodromy}, we need to know a priori that an exodromy theorem holds, which is provided by \cite{vDdB} (it can also be deduced from \cite{BGH}; see the introduction of \cite{vDdB} for a comparison).
\end{itemize}

\begin{Par}
If $\Char k = p \geq 0$, write $\widehat{\mathbf Z}^{(p')}$ for the limit of $\mathbf Z/n\mathbf Z$ over all $n \in \mathbf Z_{>0}$ invertible in $k$. If $p = 0$, it is the profinite completion $\widehat{\mathbf Z}$, and if $p > 0$, it is the usual prime-to-$p$ completion. Write $\mathbf Z_{p'}$ for the subring of $\mathbf Q$ obtained by inverting all primes invertible in $k$. Thus, $\mathbf Z_{p'} = \mathbf Q$ if $p = 0$ and $\mathbf Z_{p'} = \mathbf Z_{(p)}$ if $p > 0$.
\end{Par}

\begin{Par}\label{Par tame G_m}
If $X = \mathbf G_m^n$ over some algebraically closed field $k$ of characteristic $p \geq 0$, then $\pi_1^t(X) \cong \big(\widehat{\mathbf Z}^{(p')}\big)^n$. For $n = 1$, this follows easily from the Riemann--Hurwitz theorem, or by lifting to characteristic $0$ \cite[Exp.\ XIII, Cor.\ 2.12]{SGA1}. For arbitrary $n$, we may for instance use the K\"unneth formula of \cite[Prop.\ 3]{Hoshi}, noting that the groups there agree with the tame fundamental groups by \cite[Prop.\ B.7]{Hoshi} and \cite[Thm.\ 4.4(vii)]{KerzSchmidt}. 

If $X = \Spec k[M]$ for a finitely generated free abelian group $M$, then $\pi_1^t(X)$ is canonically identified with $\widehat{M^\vee}{}^{(p')}$, where $M^\vee = \Hom(M,\mathbf Z)$ is the linear dual. For a surjection $M^\vee \twoheadrightarrow A$ to a finite group of order prime to $p$, the corresponding covering $Y \to X$ is given as follows: if $A^*$ denotes the Pontryagin dual of $A$, then the perfect dual $A^\vee = \RHom(A,\mathbf Z)$ is $A^*[-1]$, so perfect duality gives an isomorphism $\Hom(M^\vee,A) \cong \Ext(A^*,M)$. Thus, $M^\vee \to A$ corresponds to an extension $0 \to M \to N \to A^* \to 0$, and $Y \to X$ is the cover $\Spec k[N] \to \Spec k[M]$. In particular, the universal cover of $X$ is $\Spec k[M \otimes \mathbf Z_{p'}]$.
\end{Par}

We start with two lemmas on pullback functors induced by morphisms of schemes with geometrically connected generic fibre. They are stated topos-theoretically to unclutter the notation.

\begin{Lemma}\label{Lem preserves coproducts}
Let $f_* \colon \mathscr Y \to \mathscr X$ be a morphism of topoi, and let $\mathscr X_0 \subseteq \mathscr X$ be a small set of generators. Then the following are equivalent:
\begin{enumerate}
\item\label{item pushforward} Pushforward $f_*$ preserves finite coproducts;
\item\label{item decomposition} For every $U \in \mathscr X$, every decomposition $f^*U = V \amalg W$ is obtained by pullback from a unique decomposition of $U$.
\item\label{item decomposition basis} For every $U \in \mathscr X_0$, every decomposition $f^*U = V \amalg W$ is obtained by pullback from a unique decomposition of $U$.
\end{enumerate}
If $f_*$ is the map $\Sh(Y_\et) \to \Sh(X_\et)$ induced by a dominant morphism $f \colon Y \to X$ of normal integral schemes, then these are also equivalent to:
\begin{enumerate}[resume]
\item\label{item pi_0} Pullback $f^* \colon \Et_{/X}^{\operatorname{fps}} \to \Et_{/Y}^{\operatorname{fps}}$ induces bijections $\pi_0(V) \to \pi_0(f^*V)$ for all $V \in \Et_{/X}^{\operatorname{fps}}$, where $\Et{}^{\operatorname{fps}}$ denotes the wide subcategory of schemes whose morphisms are finitely presented, separated, and \'etale.
\item\label{item geometrically connected} The generic fibre of $f$ is geometrically connected.
\item\label{item primary} The function field extension $K(X) \to K(Y)$ is primary.
\end{enumerate}
\end{Lemma}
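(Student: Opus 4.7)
The plan is to prove the equivalences in three blocks: the general topos-theoretic part (a)$\iff$(b)$\iff$(c), the geometric identification (c)$\iff$(d), and the field-theoretic equivalences (d)$\iff$(e)$\iff$(f). For (a)$\Rightarrow$(b), I would start with a decomposition $f^*U = V \amalg W$, push forward using (a) to get $f_*f^*U = f_*V \amalg f_*W$, compose with the unit $U \to f_*f^*U$, and pull back the coproduct decomposition (coproducts are stable in a topos) to obtain $U = U_V \amalg U_W$; a check with the counit $f^*f_* \to \id$ shows $f^*U_V = V$ and $f^*U_W = W$. Conversely, for (b)$\Rightarrow$(a), I would check that the canonical map $f_*V \amalg f_*W \to f_*(V \amalg W)$ induces a bijection on $\Hom(U,-)$: both sides parametrize decompositions of $U$ (resp.\ of $f^*U$) equipped with maps of the pieces to $V$ and $W$, using disjointness and stability of coproducts in a topos, and (b) transports between these; the empty case $f_*(\varnothing) = \varnothing$ is the same argument applied to the empty decomposition. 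For (b)$\iff$(c), decompositions of $U$ are classified by $\Hom(U, 1 \amalg 1)$, so both conditions say precisely that the canonical morphism $1 \amalg 1 \to f_*(1 \amalg 1)$ induces a bijection on $\Hom(U,-)$, either for all $U \in \mathscr X$ or only for $U \in \mathscr X_0$; these coincide because $\mathscr X_0$ generates $\mathscr X$.

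Next, for (c)$\iff$(d), I would take $\mathscr X_0$ to be the set of representables $h_V$ for $V \in \Et_{/X}^{\operatorname{fps}}$, which generate $\Sh(X_\et)$. By Yoneda, decompositions of $h_V$ correspond to decompositions of $V$ as an $X$-scheme, and these in turn are parametrized by subsets of $\pi_0(V)$; pullback along $f$ corresponds to the induced map $\pi_0(V) \to \pi_0(V \times_X Y)$. So (c) applied to these generators is exactly the statement that this map is a bijection for every fps étale $V \to X$, which is (d).

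Finally, for (d)$\iff$(e)$\iff$(f), I would first reduce (d) to a single connected component $V_i$ of $V$; normality of $X$ and étaleness force $V_i$ to be integral with function field $K_i$ a finite separable extension of $K(X)$. Since $V_i \times_X Y$ is normal (étale over the normal $Y$), its connected components coincide with its irreducible components, whose generic points lie over $\eta_Y$ and correspond bijectively to minimal primes of the generic fibre $\Spec(K_i \otimes_{K(X)} K(Y))$. Thus $V_i \times_X Y$ is connected iff $K_i \otimes_{K(X)} K(Y)$ is a field, and requiring this for every finite separable $K_i/K(X)$ is precisely the definition of $K(X) \to K(Y)$ being primary, yielding (d)$\iff$(f). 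For (e)$\iff$(f), I would invoke the standard fact that a normal integral $k$-scheme with function field $L$ is geometrically connected over $k$ iff $L/k$ is primary, applied to $Y_\eta$ (which is normal integral with function field $K(Y)$). The main anticipated obstacle lies in the step (d)$\iff$(f): one must carefully argue that $\pi_0(V_i \times_X Y)$ is detected by the generic fibre over $\eta_Y$, which rests on normality ensuring connected components equal irreducible components and on each such component surjecting onto $Y$ so as to have a unique generic point above $\eta_Y$.
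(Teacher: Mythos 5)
Your proposal is correct and follows essentially the same route as the paper: the topos-theoretic equivalences via the analysis of decompositions classified by maps to $1 \amalg 1$ and checked on a generating set, the passage to the \'etale site via the representables $\Et_{/X}^{\operatorname{fps}}$ together with normality to control $\pi_0$, and the reduction to generic fibres. The only cosmetic difference is in the last block, where you prove (d)$\Leftrightarrow$(f) directly by the tensor-product computation and then relate (e) and (f), while the paper passes through (e) first and cites the Stacks project for (e)$\Leftrightarrow$(f); the underlying geometry is identical.
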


\begin{proof}
If $\mathscr F, \mathscr G \in \mathscr Y$, there is a natural map $f_*\mathscr F \amalg f_*\mathscr G \to f_*(\mathscr F \amalg \mathscr G)$, which is an isomorphism if and only if the map
\[
\Hom\big(U,f_*\mathscr F \amalg f_*\mathscr G\big) \to \Hom\big(U,f_*(\mathscr F \amalg \mathscr G)\big) \cong \Hom\big(f^*U,\mathscr F\amalg \mathscr G\big)
\]
is a bijection for all $U \in \mathscr X$, or equivalently for all $U \in \mathscr X_0$. Naturality in $\mathscr F$ and $\mathscr G$ gives a commutative diagram
\[
\begin{tikzcd}
\Hom\big(U,f_*\mathscr F \amalg f_*\mathscr G\big) \ar{r}\ar{d} & \Hom\big(f^*U,\mathscr F\amalg \mathscr G\big) \ar{d} \\
\Hom\big(U,* \amalg *\big) \ar{r} & \Hom\big(f^*U,*\amalg *\big)
\end{tikzcd}
\]
Since coproducts are disjoint and universal, elements on the bottom correspond to decompositions of $U$ and $f^*U$ respectively, and the bottom map sends $U = V \amalg W$ to $f^*U = f^*V \amalg f^*W$ by exactness of $f^*$. The fibre of the left vertical map over a decomposition $U = V \amalg W$ is $\Hom(V,f_*\mathscr F) \times \Hom(W,f_*\mathscr G)$, and likewise for the right vertical map. Thus, if the bottom map is a bijection, then so is the top map because of the isomorphism $\Hom(V,f_*\mathscr F) \times \Hom(W,f_*\mathscr G) \cong \Hom(f^*V,\mathscr F) \times \Hom(f^*W,\mathscr G)$. This proves the equivalences \ref{item pushforward} $\Leftrightarrow$ \ref{item decomposition} $\Leftrightarrow$ \ref{item decomposition basis}.

In the case of normal schemes, \ref{item pi_0} is a reformulation of \ref{item decomposition basis} since $\Et_{/X}^{\operatorname{fps}}$ forms a basis of $\Sh(X_\et)$. Since $X$ and $Y$ are normal, so is any $V$ in $\Et_{/X}$ or $\Et_{/Y}$ \cite[Tag \href{https://stacks.math.columbia.edu/tag/033C}{033C}]{Stacks}. Since $V \to X$ is flat, every generic point of an open $U \subseteq V$ lies above the generic point of $X$ \cite[Tag \href{https://stacks.math.columbia.edu/tag/00HS}{00HS}]{Stacks}, hence there are finitely many since $V \to X$ is finitely presented \'etale. We conclude that $X$ is a finite disjoint union of normal schemes \cite[Tag \href{https://stacks.math.columbia.edu/tag/0357}{0357}]{Stacks}. 

In particular, $\pi_0(V)$ agrees with $\pi_0\big(V \times_X \Spec K(X)\big)$ for $V \in \Et_{/X}^{\operatorname{fps}}$, and likewise for $Y$. Since every finite extension of $K(X)$ can be spread out to a finitely presented, separated, \'etale map $V \to X$, condition \ref{item pi_0} becomes equivalent to the statement that pullback $\Et_{/K(X)} \to \Et_{/K(Y)}$ preserves $\pi_0$, which is exactly \ref{item geometrically connected}.
The final equivalence \ref{item geometrically connected} $\Leftrightarrow$ \ref{item primary} is \cite[Tag \href{https://stacks.math.columbia.edu/tag/0G33}{0G33}]{Stacks} (which uses `geometrically irreducible', but the proof also works for `geometrically connected').
\end{proof}

\begin{Lemma}\label{Lem locally constant}
Let $f \colon \mathscr Y \to \mathscr X$ be a morphism of topoi satisfying the conditions of \autoref{Lem preserves coproducts}.
\begin{enumerate}
\item\label{item slice topos} If $U \in \mathscr X$, then the induced geometric morphism $\mathscr Y_{/f^*U} \to \mathscr X_{/U}$ also satisfies the conditions of \autoref{Lem preserves coproducts}.
\item\label{item unit} If $\mathscr F \in \mathscr X$ is finite locally constant, then the unit $\mathscr F \to f_*f^*\mathscr F$ is an isomorphism.
\end{enumerate}
\end{Lemma}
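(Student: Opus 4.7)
The plan is to verify (a) directly via criterion \ref{item decomposition basis} of the previous lemma, and then to deduce (b) by passing to a trivialising cover for $\mathscr F$ and using that finite locally constant sheaves locally become finite coproducts of the terminal object.

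For (a), let $g \colon \mathscr Y_{/f^*U} \to \mathscr X_{/U}$ denote the induced geometric morphism, and fix a small generating set $\mathscr X_0 \subseteq \mathscr X$. The family of slice objects $V \to U$ with $V \in \mathscr X_0$ (with arbitrary structure map to $U$) generates $\mathscr X_{/U}$, since every object of the slice admits a cover by such. A decomposition of $g^*(V \to U) = (f^*V \to f^*U)$ in $\mathscr Y_{/f^*U}$ is exactly a decomposition $f^*V = W_1 \amalg W_2$ in $\mathscr Y$ (the structure maps to $f^*U$ being forced by the inclusions); the hypothesis on $f$ lifts this uniquely to $V = V_1 \amalg V_2$ in $\mathscr X$, and the restrictions $V_i \hookrightarrow V \to U$ give the corresponding decomposition in $\mathscr X_{/U}$. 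Uniqueness in the slice follows from uniqueness in $\mathscr X$.

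For (b), observe that being an isomorphism is a local condition: if $\{U_i \to *\}_{i \in I}$ is a cover of the terminal object of $\mathscr X$, then the family of pullback functors $(-) \times U_i \colon \mathscr X \to \mathscr X_{/U_i}$ is jointly conservative. I would choose such a cover trivialising $\mathscr F$, so that $\mathscr F \times U_i \cong \coprod_{S_i} U_i$ in $\mathscr X_{/U_i}$ for some finite set $S_i$. By (a), the induced morphism $g_i \colon \mathscr Y_{/f^*U_i} \to \mathscr X_{/U_i}$ still satisfies the hypothesis of \autoref{Lem preserves coproducts}, so $(g_i)_*$ preserves finite coproducts; moreover both $g_i^*$ and $(g_i)_*$ preserve the terminal object (the former as a lex left adjoint, the latter as a right adjoint). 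Consequently
\[
(g_i)_* g_i^* \bigg(\coprod_{S_i} \ast\bigg) \cong (g_i)_* \bigg(\coprod_{S_i} \ast\bigg) \cong \coprod_{S_i} \ast,
\]
and the unit is the canonical identification.

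One piece of bookkeeping remains: identifying the restriction $(\mathscr F \to f_*f^*\mathscr F) \times U_i$ with the unit of $g_i$ at $\mathscr F|_{U_i}$. By adjunction this reduces to the natural bijection
\[
\Hom_{\mathscr X_{/U_i}}\big(V \to U_i, f_*f^*\mathscr F \times U_i \to U_i\big) \cong \Hom_{\mathscr Y}(f^*V, f^*\mathscr F) \cong \Hom_{\mathscr Y_{/f^*U_i}}\big(g_i^*(V \to U_i), g_i^*(\mathscr F|_{U_i})\big),
\]
which is immediate from the defining adjunctions. I expect this base-change bookkeeping, rather than any conceptual obstruction, to be the only mildly annoying step; the real content of the lemma is carried by part (a), which itself is a direct unpacking of the criterion in the preceding lemma.
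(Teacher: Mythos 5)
Your proof is correct and follows essentially the same route as the paper's: reduce (b) to the constant case over a trivialising cover using (a), joint conservativity of the slice restrictions, and preservation of finite coproducts and the terminal object by both adjoints. The only (cosmetic) difference is in (a), where you verify criterion \ref{item decomposition basis} of \autoref{Lem preserves coproducts} on the generating family of slice objects $V \to U$, $V \in \mathscr X_0$, while the paper verifies criterion \ref{item pushforward} directly via the pullback-square description of the slice pushforward $f_{U,*}$; since these criteria are equivalent by that lemma, both arguments are valid.
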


\begin{proof}
\ref{item slice topos} We will use condition \ref{item pushforward} of \autoref{Lem preserves coproducts}. The pushforward $f_{U,*} \colon \mathscr Y_{/f^*U} \to \mathscr X_{/U}$ takes $\mathscr F \to f^*U$ to the top map of the pullback square
\[
\begin{tikzcd}
V \ar{r}\ar{d} & U \ar{d} \\
f_*\mathscr F \ar{r} & f_*f^*U\punct{.}
\end{tikzcd}
\]
The coproduct of $\mathscr F \to f^*U$ and $\mathscr G \to f^*U$ in $\mathscr Y_{/f^*U}$ is given by $\mathscr F \amalg \mathscr G \to f^*U$, so the hypothesis and universality of coproducts show that $f_{U,*}$ also preserves finite coproducts.

\ref{item unit} For any $U \in \mathscr X$, the unit $\mathscr F \times U \to f_{U,*}f_U^*(\mathscr F \times U)$ of $f_U^* \dashv f_{U,*}$ applied to $\mathscr F \times U \in \mathscr X_{/U}$ is identified with the map $\mathscr F \times U \to (f_*f^*\mathscr F) \times U$. Thus, if $\{U_i \to *\}_{i \in I}$ is a jointly epimorphic family, then to check that $\mathscr F \to f_*f^*\mathscr F$ is an isomorphism, it suffices to prove this locally in each $\mathscr X_{/U_i}$. By \ref{item slice topos}, we therefore reduce to the case where $\mathscr F$ is finite constant. Since both $f^*$ and $f_*$ preserve finite coproducts by hypothesis, we reduce to $\mathscr F = *$, where the statement follows since both $f^*$ and $f_*$ preserve finite limits.
\end{proof}

This allows us to carry out the main computation, analogous to \autoref{Lem BL}\ref{item C affine}.

\begin{Lemma}\label{Lem toric}
Let $X$ be a split normal affine toric variety over a field $k$, and let $i \colon Y \hookrightarrow X$ and $\pi \colon X \twoheadrightarrow Y$ be the maps defined in \ref{Par affine}. Let $j_s \colon X_s \hookrightarrow X$ be the inclusion of a stratum, let $\mathscr G \in \Sh\lc^t(X_{s,\et})$, and let $\mathscr F = j_{s,*} \mathscr G$. Then the map $\pi_*\mathscr F \to i^*\mathscr F$ of \autoref{Par section} is an isomorphism, and this sheaf is in $\Sh\lc^t(Y)$. Moreover, the maps $\pi_1^t(Y) \to \pi_1^t(X) \to \pi_1^t(Y)$ induced by $i$ and $\pi$ are mutually inverse isomorphisms.
\end{Lemma}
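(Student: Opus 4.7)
The proof follows the strategy of \autoref{Lem BL}\ref{item C affine}, with the topological homotopy input replaced by an \'etale-local computation using tame fundamental groups. As in that proof, we first reduce to the case where $X_s = W$ is the open dense torus stratum of $X$ (after replacing $X$ by $X_{\leq s}$) and $j = j_s \colon W \hookrightarrow X$ is the open inclusion, so $\mathscr F = j_*\mathscr G$. Since $M^\times$ is a direct summand of the free abelian group $M\gp$, we split $M = M^\times \oplus \overline M$ and obtain product decompositions $X = Y \times_k X_0$ and $W = Y \times_k W_0$, where $X_0 = \Spec k[\overline M]$ has $\{0\}$ as its unique closed torus orbit and $W_0 \cong \mathbf G_{m,k}^n$ is its open torus (with $n = \operatorname{rank}\overline M\gp$); under these, $\pi$ is the first projection and $i$ is the inclusion $Y \times \{0\} \hookrightarrow X$.

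The key step is a comparison on stalks. For a geometric point $\bar y$ of $Y$ and $\bar x = i(\bar y)$, unwinding the definitions gives
\[
(\pi_*\mathscr F)_{\bar y} = \mathscr G\big(Y\sh_{\bar y} \times_k W_0\big), \qquad (i^*\mathscr F)_{\bar y} = \mathscr G\big(X\sh_{\bar x} \times_X W\big),
\]
with the comparison map induced by the canonical morphism $X\sh_{\bar x} \to Y\sh_{\bar y} \times_k X_0$ (strict henselization at $(\bar y, 0)$). Since $\mathscr G$ is tame locally constant, bijectivity reduces to showing that $X\sh_{\bar x} \times_X W \to Y\sh_{\bar y} \times_k W_0$ induces an isomorphism on tame fundamental groups, both being canonically $(\widehat{\mathbf Z}^{(p')})^n$ in the sense of \autoref{Par tame G_m}. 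The right-hand side is obtained by smooth specialization to the closed fibre of the projection $Y\sh_{\bar y} \times_k W_0 \to Y\sh_{\bar y}$, reducing to $W_0$ base-changed to the separably closed residue field, where \autoref{Par tame G_m} applies. The left-hand side is the same group by Abhyankar-style tame purity: the tame loops in the punctured strict henselization are indexed by the extremal rays of the cone dual to $\overline M$, spanning $\widehat{(\overline M\gp)^\vee}{}^{(p')} \cong (\widehat{\mathbf Z}^{(p')})^n$. This step is the main technical obstacle; it substitutes for the (unavailable) tame $\mathbf A^1$-invariance of singular toric varieties.

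Once the stalk isomorphism is established, $\pi_*\mathscr F = f_*\mathscr G$ with $f = \pi \circ j \colon W = Y \times W_0 \to Y$ the first projection. Its generic fibre $\mathbf G_{m,K(Y)}^n$ is geometrically connected, so \autoref{Lem locally constant} applies to $f$; combined with the stalk description, this identifies $f_*\mathscr G$ as the tame finite locally constant sheaf on $Y$ corresponding to the $\pi_1^t(W_0)$-invariants of the $\pi_1^t(W)$-set $\mathscr G$. For the fundamental-group statement, $\pi \circ i = \id_Y$ gives $\pi_1^t(\pi) \circ \pi_1^t(i) = \id$ immediately; for the opposite composition, apply the proven isomorphism with $\mathscr G = j^*\mathscr F'$ for arbitrary $\mathscr F' \in \Sh\lc^t(X_\et)$ (noting that $\mathscr F' \cong j_*j^*\mathscr F'$ since $X$ is normal and $\mathscr F'$ is represented by a finite \'etale cover). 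The resulting natural identification shows that $\pi^* \colon \Sh\lc^t(Y_\et) \to \Sh\lc^t(X_\et)$ is an equivalence with inverse $i^*$, whence $\pi_1^t(\pi)$ and $\pi_1^t(i)$ are mutually inverse isomorphisms.
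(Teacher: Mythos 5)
Your route is genuinely different from the paper's, and the difference matters. You compare the two sides stalkwise at \emph{every} geometric point $\bar y$ of $Y$, which forces you to understand the punctured strict henselisation $X\sh_{\bar x} \times_X W$ at arbitrary (possibly very singular) points. The paper deliberately avoids this: it first uses \autoref{Lem preserves coproducts} to reduce to $\mathscr G$ connected, hence a Kummer cover given by a character $\phi$ of $(M\gp)^\vee$, and then splits into a dichotomy. If $\phi$ kills $(\overline M{}\gp)^\vee$, then $\mathscr G$ is pulled back from $Y$ and everything follows from the unit isomorphisms of \autoref{Lem locally constant}\ref{item unit}. If not, \emph{both sides are empty}; and since emptiness of $i^*j_*\mathscr G$ only has to be contradicted at one point of each nonempty \'etale $V \to Y$, the paper may work at the geometric generic point of $Y$ and use the completed local ring $K[\![\overline M]\!]$, where the computation is a one-variable-style Galois argument. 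No local analysis at special points of $Y$ is ever needed.

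That local analysis is where your proposal has a genuine gap. The justification offered for the key step --- that ``the tame loops in the punctured strict henselization are indexed by the extremal rays of the cone'' and span $\widehat{(\overline M{}\gp)^\vee}{}^{(p')}$ --- is not a proof, and as a description it is incorrect in general: the inertia elements attached to the boundary divisors are the primitive ray generators, and these generate only a finite-index subgroup of $(\overline M{}\gp)^\vee$ (already for the cone of an $A_{n-1}$-singularity, where they generate an index-$n$ sublattice); the missing monodromy comes from the singular point itself, not from divisorial loops. What you actually need is weaker and is true: the image of $\pi_1(X\sh_{\bar x}\times_X W)$ in $\pi_1^t(W)$ lands in $1 \times \pi_1^t(W_0)$ (easy, since $X\sh_{\bar x} \to Y$ lifts to $Y\sh_{\bar y}$) and surjects onto $\pi_1^t(W_0)$, i.e.\ every connected Kummer cover of $W_0$ pulls back to a connected scheme over $X\sh_{\bar x}\times_X W$. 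One can prove this by normalising $X_0$ in the cover to get $\Spec k[P']$ with $P'$ sharp, hence with a unique point over $0$, so that the corresponding finite algebra over $X\sh_{\bar x}$ is a strictly henselian normal local domain whose open locus over $W$ is irreducible. Without some such argument (or the paper's detour through $K[\![\overline M]\!]$), the central step of your proof is unsupported. The remaining ingredients --- the stalk formulas via finite presentation of $\mathscr G$, the splitting $X \cong Y\times_k X_0$ coming from $M \cong M^\times \oplus \overline M$, and the deduction of the statement on $\pi_1^t$ --- are fine, though the essential surjectivity of $\pi^*$ (equivalently, that the counit $\pi^*i^*\mathscr F' \to \mathscr F'$ is an isomorphism) still needs to be spelled out via the same dichotomy, as the paper does.
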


As noted after \autoref{Lem A1 homotopy}, the final statement would follow if we knew that tame fundamental groups are $\mathbf A^1$-homotopy invariant for singular varieties. However, we will see that the statement also rolls out of the proof.

\begin{proof}
The result only depends on the pushforward of $\mathscr G$ to $X_{\leq s}$, so we may assume $X_s = U$ is the open stratum and $j = j_s \colon U \hookrightarrow X$ the corresponding open immersion. To check that $\pi_*\mathscr F \to i^*\mathscr F$ is an isomorphism, it suffices to do this over the algebraic closure, so we may assume $k$ is algebraically closed. By \autoref{Lem preserves coproducts}, the functors $\pi_*$ and $j_*$ preserve finite coproducts, and the same goes for $i^*$ since it is a left adjoint. We reduce to the case that $\mathscr G$ is connected, so by \ref{Par tame G_m} it is represented by $T = \Spec k[N] \to U$ for some extension $0 \to M\gp \to N \to A^* \to 0$, corresponding to a homomorphism $\phi \colon (M\gp)^\vee \twoheadrightarrow A$.

Recall from \ref{Par affine} the short exact sequence of monoids $0 \to M^\times \to M \to \overline M \to 0$. First suppose $\phi$ does not kill $(\overline M{}\gp)^\vee$. We will show that $i^*j_* \mathscr G = \varnothing$ in this case, which forces $\pi_*j_*\mathscr G \to i^*j_*\mathscr G$ to be an isomorphism. For this, it suffices to show that if $V \to Y$ is a nonempty \'etale open and $V \to W \to X$ is an \'etale neighbourhood of $V$ in $X$, then $\Hom_U(W \times_X U,T) = \varnothing$. Suppose such a map $f \colon W \times_X U \to T$ exists. Let $\Spec K \to Y$ be the geometric generic point, choose a point $\Spec K \to V$, and let $w \colon \Spec K \to W_K$ be the resulting point. The fibre $X_K$ is the affine toric variety $\Spec K[\overline M]$ with torus $U_K = \Spec K[\overline M{}\gp]$ and closed stratum $0 = \Spec K$. Then $\Spec K \to W_K \to X_K$ is an \'etale neighbourhood of the closed stratum, so we get an isomorphism
\[
K[\![\overline M]\!] = \widehat{\mathcal O}_{X_K,0} \stackrel\sim\to \widehat{\mathcal O}_{W_K,w}.
\]
Therefore, $f$ induces a map $\widetilde f \colon \Spec K(\!(\overline M)\!) \to T_K$. Note that $T_K$ need not be connected: its components correspond to the orbits of $\overline M{}\gp$ acting on $A$ via translation. Since there exists an element of $\overline M{}\gp$ acting nontrivially on $A$, we conclude that $T_K$ splits up as a disjoint union of isomorphic copies of a nontrivial cover $T_0 \to U_K$. Since $\Gal(K(\!(\overline M)\!)) \to \pi_1^t(U_K)$ is surjective, we conclude that $T_K \times_U \Spec K(\!(\overline M)\!)$ also splits as a disjoint union of isomorphic covers of a nontrivial field extension, contradicting the fact that we have a section $\widetilde f$.

So we are left with the case where $(\overline M{}\gp)^\vee$ acts trivially on $A$, so the action descends to an action $(M^\times)^\vee \to A$. This means exactly that $\mathscr G = (\pi|_U)^* \mathscr H = j^*\pi^* \mathscr H$ for some $\mathscr H \in \Sh\lc^t(Y)$. By \autoref{Lem locally constant}\ref{item unit}, the units $\mathscr H \to \pi_*\pi^* \mathscr H$ and $\pi^*\mathscr H \to j_*j^*\pi^* \mathscr H = j_*\mathscr G$ are isomorphisms. We conclude that $\mathscr F = j_*\mathscr G = \pi^*\mathscr H$, and the map $\pi_*\mathscr F \to i^*\mathscr F$ is inverse to the unit $\mathscr H \stackrel\sim\to \pi_*\pi^*\mathscr H$. This also shows that $\pi_*\mathscr F = \mathscr H$ is in $\Sh\lc^t(Y)$.

For the statement on tame fundamental groups, we have to show that the pullback functor $\pi^* \colon \Sh\lc^t(Y_\et) \to \Sh\lc^t(X_\et)$ is an equivalence of categories. Fully faithfulness follows from \autoref{Lem locally constant}\ref{item unit}. If $\mathscr F \in \Sh\lc^t(Y_\et)$ is connected, then the same goes for $j^*\mathscr F$, and $j_*j^*\mathscr F = \mathscr F$ by \autoref{Lem locally constant}\ref{item unit}. Thus, the action of $(\overline M\gp)^\vee$ on $j^*\mathscr F$ is trivial, as otherwise $i^*\mathscr F = \varnothing$ by the first computation above. The second computation shows that $\mathscr F$ comes from $\Sh\lc^t(Y_\et)$. This proves that $\pi_1^t(X) \to \pi_1^t(Y)$ is an isomorphism, and the same then goes for $\pi_1^t(Y) \to \pi_1^t(X)$ since the composition $\pi \circ i$ is the identity on $Y$.
\end{proof}

This gives the full version of \autoref{Lem BL} in the \'etale case:

\begin{Cor}\label{Cor toric}
Let $X$ be a split normal toric variety over a field $k$.
\begin{enumerate}
\item\label{item finite limit} A sheaf $\mathscr F \in \Sh(X_\et)$ is in $\Sh\cons[S]^t(X_\et)$ if and only if $\mathscr F$ is a finite limit of sheaves of the form $j_{s,*} \mathscr G$, where $j_s \colon X_s \hookrightarrow X$ is the locally closed immersion of some stratum and $\mathscr G \in \Sh\lc^t(X_s)$.
\item\label{item affine} If $X$ is affine and $i \colon Y \hookrightarrow X$ and $\pi \colon X \twoheadrightarrow Y$ are the maps defined in \ref{Par affine}, then the transformation $\pi_* \to i^*$ of \ref{Par section} gives an isomorphism of functors $\Sh\cons[S]^t(X_\et) \to \Sh\lc^t(Y_\et)$.
\end{enumerate}
\end{Cor}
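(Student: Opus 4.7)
The plan is to follow the proof of \autoref{Lem BL} almost verbatim, substituting \autoref{Lem toric} for the topological computation in part (b) there. I would prove \ref{item finite limit} and \ref{item affine} together by the same recollement induction; the only ingredients beyond \autoref{Lem toric} are the closure of $\Sh\cons[S]^t(X_\et)$ and $\Sh\lc^t$ under finite limits, and the exactness properties of the functors $\pi_*$, $i^*$, $j_*$, $j^*$.

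For the backward direction of \ref{item finite limit}, it is enough to show that each $j_{s,*}\mathscr G$ with $\mathscr G \in \Sh\lc^t(X_s)$ lies in $\Sh\cons[S]^t(X_\et)$. Fixing $t \in S$, I would compute $j_t^* j_{s,*}\mathscr G$ in two cases. When $t \leq s$, I restrict to $X' = X_{\geq t} \cap X_{\leq s}$, which is again a split normal affine toric variety with $X_t$ as its unique closed stratum and $X_s$ as its open stratum; \autoref{Lem toric} applied to $X'$ then identifies $j_t^* j_{s,*}\mathscr G$ with $\pi_* j_{s,*}\mathscr G \in \Sh\lc^t(X_t)$. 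When $t \not\leq s$, the Zariski open $X \setminus \overline{X_s}$ is an étale neighbourhood of every geometric point of $X_t$ disjoint from $X_s$, so $j_t^* j_{s,*}\mathscr G$ is the terminal sheaf, hence trivially in $\Sh\lc^t(X_t)$. The claim then follows because $\Sh\cons[S]^t(X_\et)$ is closed under finite limits.

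For the forward direction I would induct on $|S|$. Pick any minimal element $r \in S$; then $i \colon Z = X_r \hookrightarrow X$ is a closed immersion (since $\overline{X_r} = X_r$) with open complement $j \colon U = X \setminus X_r \hookrightarrow X$, and $U$ inherits the structure of a split normal toric variety (obtained by deleting the maximal cone associated to $r$) stratified by $S \setminus \{r\}$. As in the proof of \autoref{Lem BL}, any $\mathscr F \in \Sh\cons[S]^t(X_\et)$ fits in the recollement pullback square with corners $\mathscr F$, $j_*j^*\mathscr F$, $i_*i^*\mathscr F$, $i_*i^*j_*j^*\mathscr F$. By induction, $j^*\mathscr F$ is a finite limit of sheaves pushed forward from strata of $U$; since $j_*$ preserves finite limits and composing the inclusions gives the inclusion into $X$, $j_*j^*\mathscr F$ is a finite limit of sheaves $j_{s,*}\mathscr G_s$ on $X$ (with $s \in S \setminus \{r\}$). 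Applying $i^*$ and the backward direction just proved gives $i^*j_*j^*\mathscr F \in \Sh\lc^t(X_r)$; combined with $i^*\mathscr F \in \Sh\lc^t(X_r)$ by hypothesis, this exhibits $\mathscr F$ as a finite limit of sheaves of the required form.

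Part \ref{item affine} is then formal: the natural transformation $\pi_* \to i^*$ is an isomorphism on each $j_{s,*}\mathscr G$ by \autoref{Lem toric}, and both functors preserve finite limits ($\pi_*$ as a right adjoint, $i^*$ as the inverse image of a geometric morphism), so by naturality it is an isomorphism on any finite limit of such sheaves, hence on all of $\Sh\cons[S]^t(X_\et)$. Its common value lies in $\Sh\lc^t(Y)$ because each $\pi_* j_{s,*}\mathscr G$ does (by \autoref{Lem toric}) and $\Sh\lc^t(Y)$ is closed under finite limits. The genuine difficulty is entirely contained in \autoref{Lem toric}; once that affine-local computation is in hand, the corollary is a routine consequence along the lines of \autoref{Lem BL}.
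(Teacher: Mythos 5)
Your proof is correct and follows essentially the same route as the paper: the backward direction of \ref{item finite limit} by the case split $t \leq s$ (reduce to $X_{\geq t} \cap X_{\leq s}$ and apply \autoref{Lem toric}) versus $t \not\leq s$ (terminal sheaf), the forward direction by induction on $\lvert S\rvert$ via the recollement pullback square, and \ref{item affine} formally from \autoref{Lem toric} plus the fact that $\pi_*$ and $i^*$ preserve finite limits. The only difference is that you spell out the induction step that the paper delegates to the proof of \autoref{Lem BL} and the glueing reference, and your elaboration is accurate.
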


\begin{proof}
\ref{item finite limit} If $t \in S$, we want to show that $j_t^*j_{s,*}\mathscr G \in \Sh\lc^t(X_{t,\et})$. If $t \leq s$, we may replace $X$ by $X_{\geq t} \cap X_{\leq s}$, and then \autoref{Lem toric} shows that $j_t^*j_{s,*} \mathscr G \in \Sh\lc^t(X_{t,\et})$. If $t \not\leq s$, we have $j_t^*j_{s,*} \mathscr G = *$, which is also in $\Sh\lc^t(X_{t,\et})$. As $\Sh\cons[S](X_\et)$ is stable under finite limits, this shows one inclusion. The converse follows by induction on $\lvert S \rvert$ in the same way as in \autoref{Lem BL}, using the glueing procedure explained in \cite[Ch.\ II, Thm.\ 3.10]{Milne}. Now \ref{item affine} follows from \autoref{Lem toric} since $i^*$ and $\pi_*$ preserve finite limits.
\end{proof}

Analogously to \autoref{Thm complex}, we get the following result for tame \'etale fundamental categories.

\begin{Thm}\label{Thm toric}
Let $k$ be a field, let $X$ be a split normal toric variety, and let $X \to S$ be the toric stratification. There is a canonical equivalence
\[
\Sh\cons[S]^t(X_\et) \simeq \Fun\cts\big(\Pi_1^t(X,S),\Fin\big),
\]
where $\Pi_1^t(X,S)$ is the profinitely enriched category whose objects are $s \in S$ and whose morphisms are
\[
\Hom(s,t) = \begin{cases} \pi_1^t(X_{\geq s}), & s \leq t,\\ \varnothing, & s \not\leq t. \end{cases} 
\]
Composition $\Hom(s,t) \times \Hom(r,s) \to \Hom(r,t)$ for $r \leq s \leq t$ is induced by multiplication in $\pi_1^t(X_{\geq r})$ under the map $\pi_1^t(X_{\geq s}) \to \pi_1^t(X_{\geq r})$. Moreover, $\pi_1^t(X_{\geq s}) \cong \pi_1^t(X_s)$ is the semidirect product $(\widehat{\mathbf Z}^{(p')}(1))^{\dim X_s} \rtimes \Gal(\bar k/k)$.
\end{Thm}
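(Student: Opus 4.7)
The plan is to follow the same strategy as in the proof of \autoref{Thm complex}, using \autoref{Cor toric} in place of \autoref{Lem BL}. The main difference with the topological case is that the fibre functors on $\Sh\cons[S]^t(X_\et)$ are only pro-representable (as the universal tame cover $\widetilde{X_s}^t$ of a split torus is already a pro-object), so \autoref{Cor exodromy} does not apply directly. Instead I invoke the étale exodromy theorem of \cite{vDdB}, which already gives an equivalence $\Sh\cons[S]^t(X_\et) \simeq \Fun\cts(\mathscr P, \Fin)$, where $\mathscr P$ is the profinitely enriched category whose objects are the strata and whose morphisms are continuous natural transformations of fibre functors. It therefore suffices to identify $\mathscr P$ with the $\Pi_1^t(X,S)$ of the statement.

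For each $s \in S$, fix a geometric point $\bar x_s$ lying over a $k$-rational point of $X_s$ (which exists since $X_s$ is a split torus), and write $j_{\geq s} \colon X_{\geq s} \hookrightarrow X$, $i_s \colon X_s \hookrightarrow X_{\geq s}$, and $\pi_s \colon X_{\geq s} \to X_s$ for the usual maps. By part \ref{item affine} of \autoref{Cor toric} applied on $X_{\geq s}$, for every $\mathscr F \in \Sh\cons[S]^t(X_\et)$ one has $F_{\bar x_s}(\mathscr F) = (i_s^* j_{\geq s}^* \mathscr F)_{\bar x_s} = (\pi_{s,*} j_{\geq s}^* \mathscr F)_{\bar x_s}$; combining this with the adjunctions $j_{\geq s,!} \dashv j_{\geq s}^*$ and $\pi_s^* \dashv \pi_{s,*}$ and the fact that $F_{\bar x_s}$ is pro-represented by $\widetilde{X_s}^t$ on $\Sh\lc^t(X_s)$ shows that $F_{\bar x_s}$ on $\Sh\cons[S]^t(X_\et)$ is pro-represented by $j_{\geq s,!} \pi_s^* \widetilde{X_s}^t$. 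By the final assertion of \autoref{Lem toric}, $\pi_s^* \widetilde{X_s}^t$ is precisely the universal tame cover $\widetilde{X_{\geq s}}^t$ of $X_{\geq s}$.

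The Hom computation in $\mathscr P$ now runs as in the proof of \autoref{Thm complex}: by (pro-)Yoneda,
\[
\Hom\cts\big(F_{\bar x_s}, F_{\bar x_t}\big) \cong F_{\bar x_t}\big(j_{\geq s,!} \widetilde{X_{\geq s}}^t\big),
\]
viewed as a profinite set. If $s \not\leq t$, then $X_t \cap X_{\geq s} = \varnothing$, so $\bar x_t$ lies outside $X_{\geq s}$ and extension by zero gives an empty stalk at $\bar x_t$, hence no morphisms. If $s \leq t$, then $X_{\geq t} \subseteq X_{\geq s}$, so the right-hand side is the fibre of $\widetilde{X_{\geq s}}^t$ above $\bar x_t$, which after fixing a lift of $\bar x_t$ to the universal tame cover is canonically identified with the profinite group $\pi_1^t(X_{\geq s})$. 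The composition law drops out of the functoriality of these fibres under the maps $\pi_1^t(X_{\geq s}) \to \pi_1^t(X_{\geq r})$ for $r \leq s$.

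Finally, I compute $\pi_1^t(X_s)$. By \autoref{Lem toric} again, $\pi_s$ induces an isomorphism $\pi_1^t(X_{\geq s}) \cong \pi_1^t(X_s)$, so it remains to identify the latter. Since $X_s$ is a split torus of dimension $d = \dim X_s$ with the $k$-rational identity section, the tame homotopy exact sequence
\[
1 \longrightarrow \pi_1^t(X_{s, \bar k}) \longrightarrow \pi_1^t(X_s) \longrightarrow \Gal(\bar k/k) \longrightarrow 1
\]
splits. By \ref{Par tame G_m}, $\pi_1^t(X_{s,\bar k}) = \big(\widehat{\mathbf Z}^{(p')}(1)\big)^d$ with the standard Galois action by the cyclotomic character, yielding the described semidirect product. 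I expect the main obstacle to be the careful verification that the pro-representability computation really behaves correctly with respect to the continuity in the exodromy correspondence of \cite{vDdB}; but \autoref{Cor toric} is designed precisely to reduce this to the classical adjunctions on $X_s$ and $X_{\geq s}$, after which the remainder of the argument is formal.
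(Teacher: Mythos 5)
Your proposal is correct and follows essentially the same route as the paper: invoke the exodromy theorem of \cite{vDdB}, use \autoref{Cor toric} to show that $F_{\bar s}$ is pro-represented by $j_{\geq s,!}\pi^*\widetilde{X_s}$, identify $\pi^*\widetilde{X_s}$ with $\widetilde{X_{\geq s}}$ via \autoref{Lem toric}, and conclude by Yoneda. The only addition is your explicit treatment of the semidirect product via the split tame homotopy exact sequence, which the paper leaves implicit but which is consistent with \ref{Par tame G_m}.
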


\begin{proof}\renewcommand{\qedsymbol}{}
Each stratum $X_s$ is isomorphic to $\Spec k[M_s]$ for some finite free group $M_s$, and has a tame universal cover $\widetilde{X_s} \to X_s$ given by $\Spec k[M_s \otimes \mathbf Z_{p'}]$. Let $\bar s = 1 \in X_s(\bar k)$ be the identity geometric point, and $F_{\bar s} \colon \Sh\cons[S]^t(X_\et) \to \Fin$ the corresponding fibre functor. The general exodromy theorem \cite[2.21]{vDdB} gives the first statement, where $\Pi_1^t(X,S)$ is the full subcategory of $\Fun\big(\Sh\cons[S]^t(X_\et),\Fin\big)$ on the functors $F_{\bar s}$. It remains to compute the $\Hom$ sets in $\Pi_1^t(X,S)$. 

If $s \in S$, then the fibre functor $F_{\bar s} \colon \Sh\lc^t(X_{s,\et}) \to \Fin$ is canonically pro-represented by the universal cover $\widetilde{X_s} \in \Pro\fin(\Sh\lc(X_\et))$. Write $i \colon X_s \to X_{\geq s}$ for the closed immersion, and $j = j_{\geq s} \colon X_{\geq s} \to X$ for the open immersion. By \autoref{Cor toric}, the pullback functor $i^* \colon \Sh\cons[S]^t(X_{\geq s,\et}) \to \Sh\lc^t(X_{s,\et})$ has $\pi^*$ as left adjoint. Moreover, $j_!$ is left adjoint to $j^*$. Thus, for every $\mathscr F \in \Sh\cons[S]^t(X_\et)$, we have
\[
F_{\bar s}(\mathscr F) = F_{\bar s}(i^*j^*\mathscr F) = \Hom_{X_s}\big(\widetilde{X_s},i^*j^*\mathscr F\big) = \Hom_X\big(j_!\pi^*\widetilde{X_s},\mathscr F\big),
\]
so $F_{\bar s}$ is pro-represented by $j_!\pi^*\widetilde{X_s}$. We saw in \autoref{Lem toric} that $\pi \colon X_{\geq s} \to X_s$ induces an isomorphism on tame fundamental groups, so $\pi^*\widetilde{X_s}$ is the universal cover $\widetilde{X_{\geq s}}$. Now we conclude by the Yoneda lemma: if $s, t \in P$, then
\begin{align*}
\Hom\big(F_{\bar s},F_{\bar t}\big) = \Hom\big(h_{j_{\geq s,!}\widetilde{X_{\geq s}}},F_{\bar t}\big) = F_{\bar t}\big(j_{\geq s,!}\widetilde{X_{\geq s}}\big) = \begin{cases}\pi_1^t(X_{\geq s}), & s \leq t, \\ \varnothing, & s \not\leq t. \end{cases}\tag*{$\square$}
\end{align*}
\end{proof}\vspace{-1.3em}

\begin{Par}
In the topological setting, we actually have $\Pi_\infty(X,S) = \Pi_1(X,S)$ by \autoref{Rmk infty}. We expect the statement $\Pi_\infty^t(X,S) = \Pi_1^t(X,S)$ to hold in the \'etale case as well. However, there is currently not even a definition of stratified tame \'etale homotopy types, nor an exodromy theorem, so it is not so clear how to state or prove this.

For the $1$-categorical version, we used that tame locally constant \'etale sheaves coincide with the full subcategory of locally constant \'etale sheaves that are trivialised on a tame cover. This cannot be true $\infty$-categorically: for a finite group $G$, the mapping space $\pi_0\Map_{X_\et}(*,BG)$ between two \emph{constant} sheaves computes all $G$-torsors, not just the tamely ramified ones. Thus, the inclusion from tame sheaves of spaces to \'etale sheaves of spaces is not fully faithful. This is not surprising, as tame homotopy types should be constructed in a much more subtle way \cite[\S5]{HubnerSchmidt}.
\end{Par}

\phantomsection
\bibliographystyle{alphaurledit}
{\footnotesize\bibliography{Toric.bib}}

\end{document}